\newtheorem{theorem}{Theorem}
\newtheorem{corollary}[theorem]{Corollary}
\newtheorem{proposition}[theorem]{Proposition}
\theoremstyle{remark}
\newcommand{\NN}{{\mathbb N}}
\newcommand{\ZZ}{{\mathbb Z}}
\newcommand{\bc}{\begin{center}}
\newcommand{\ec}{\end{center}}
\newcommand{\be}{\begin{enumerate}}
\newcommand{\ee}{\end{enumerate}}
\newcommand{\bi}{\begin{itemize}}
\newcommand{\ei}{\end{itemize}}
\newcommand{\mc}[1]{\ensuremath{\mathcal{#1}}}
\newcommand{\Po}{\ensuremath{\mathcal{P}}}
\newcommand{\mon}[1]{\text{Mon}(#1)}
\theoremstyle{comment}
\newtheorem*{dcomment}{\color{BrickRed}{Daniel Says}}
\newtheorem*{gcomment}{\color{BrickRed}{Gordon Says}}
\newtheorem*{mcomment}{\color{BrickRed}{Michael Says}}
\keywords{Abstract polytope; abstract polyhedron; string C-group; quotient polytopes; prism; antiprism.}
\begin{document}
\title{Minimal covers of the prisms and antiprisms}
\author{Michael I. Hartley}
\address[Michael I. Hartley]{DownUnder GeoSolutions\\
80 Churchill Ave\\
Subiaco, 6008\\
Western Australia\\
E-mail: mikeh@dugeo.com}
\author{Daniel Pellicer$^{\dagger}$}
\address[Daniel Pellicer]{Instituto De Matematicas, Unidad Moreli\\
Antigua Carretera a Patzcuaro 8701\\
Col. Ex Hacienda de San Jose de la Huerta, 58089\\
Morelia, Michoacan, Mexico \\
E-mail: pellicer@matmor.unam.mx}
\author{Gordon Williams}
\address[Gordon Williams]{Department of Mathematics and Statistics\\
University of Alaska Fairbanks\\
PO Box 756660\\
Fairbanks AK 99775-6660\\
E-mail: giwilliams@alaska.edu\\
1+(907)347-4020\\
Corresponding Author}
\maketitle
\begin{abstract}This paper contains a classication of the regular minimal abstract polytopes that act as covers for the convex polyhedral prisms and antiprisms. It includes a detailed discussion of their topological structure, and completes the enumeration of such covers for convex uniform polyhedra. Additionally, this paper addresses related structural questions in the theory of string C-groups.\end{abstract}

\section{Introduction}
Symmetric maps on surfaces have been extensively studied, especially in the context of compact surfaces (see \cite{Bra27}, \cite{CoxMos80}). The geometric and combinatorial structure of the prisms and antiprisms have been studied since antiquity, and in modern times maps whose vertex figures are polygons have begun to be studied as abstract polyhedra (rank 3 polytopes)  ~\cite[\S 6B]{McMSch02}. While much work has been done on the study of regular abstract polytopes (the primary reference on the topic is \cite{McMSch02}), and there is increasingly large body of literature on the structure of chiral polytopes (abstract polytopes whose flags fall into two symmetry classes, with adjacent flags in different orbits, c.f. ~\cite{SchWei91},~\cite{Pel}),
 the study of less symmetric abstract polytopes is still in its early development.

A seminal paper in the study of less symmetric polytopes was Hartley's \cite{Har99a} discovery of how an abstract polytope may be represented as a quotient of some regular abstract polytope. This paper is part of an ongoing effort to better understand the nature of these quotient representations both geometrically as covering maps and algebraically via the group actions induced by the automorphism groups of the regular covers. In what follows we shall provide explicit descriptions of minimal regular covers of the $n$-prisms and $n$-antiprisms. 

Together with the results in \cite{HarWil10}, this provides a complete description of the minimal regular covers of the convex uniform polyhedra, where a uniform polyhedron has regular facets with an automorphism group that acts transitively on its vertices. We also provide the first description in the context of abstract polytopes of minimal regular covers for an infinite class of non-regular polytopes. 

\section{Background}
In the current work our focus is on the study of {\em abstract polyhedra}, which are defined by  restricting  the definition of {\em abstract polytopes} to the rank 3 case. Readers interested in the more general theory and definitions should see \cite[\S 2A]{McMSch02}, which we follow closely here. An abstract polyhedron \mc P is a partially ordered set, with partial order on the elements denoted by $\le$ satisfying the constraints P1-P4.
\begin{description}
\item[P1] It contains a unique minimum face $F_{-1}$ and a unique maximum face $F_{3}$.\item[P2] All maximal totally ordered subsets of \mc P, called the {\em flags} of \mc P, include $F_{-1}$ and $F_{3}$ and contain precisely $5$ elements.
\end{description}
As a consequence of P1 and P2, the ordering $\le$ induces a strictly increasing rank function on \mc P with the ranks of $F_{-1}$ and $F_{3}$ being $-1$ and $3$, respectively. Following the terminology of the inspiring geometric objects, the elements of ranks $0$, $1$ and $2$ of an abstract polyhedron are respectively called {\em vertices}, {\em edges} and {\em faces}.
Let $F,G$ be two elements of \mc P. We say $F$ and $G$  are {\em incident} if $F\le G$ or $G\le F$.
\begin{description}
\item[P3] The polyhedron \mc P is strongly connected (defined below).
\item[P4] The polyhedron \mc P satisfies the ``diamond condition'', that is, all edges are incident to precisely two vertices and two faces, and if a vertex $V$ is incident to a face $F$, then there exist precisely two edges incident to $V$ and $F$.
\end{description}
A {\em section} determined by $F$ and $G$ is a set of the form $G/F:=\{H\mid F\le H\le G\}$. A poset \mc P of rank $n$ is said to be {\em connected} if either $n\le1$ or $n\ge 2$ and for any two proper faces $F,G\in\mc P$ there exists a finite sequence of proper faces $F=H_{0},H_{1},\ldots,H_{k}=G$ of \mc P where $H_{i-1}$ and $H_{i}$ are incident for each $1\le i\le k$. A poset \mc P is said to be {\em strongly connected} if every section of \mc P is connected.

Let \mc P be an abstract polyhedron. 
The {\em vertex-figure} of \mc P at a vertex $v$ is the section $\mc P/v=\{F\in\mc P\mid v\le F\}$. The {\em degree} of a vertex $v$ is the number of edges containing $v$, and the {\em degree} of a face $f$ (sometimes called {\em co-degree} of a face) is the number of edges contained in $f$. 
Polyhedra for which the degree of every vertex is $p$ and the co-degree of every face is $q$ are said to be {\em equivelar} of {\em Schl\"afli type} $\{p,q\}$.

It follows from P4 that, given $i\in\{0,1,2\}$ and  a flag $\Psi$ of \mc P, there exists a unique flag $\Psi^{i}$ which differs from $\Psi$ only it its element at rank $i$. The flag $\Psi^{i}$ is called the {\em $i$-adjacent} flag of $\Psi$. The strong connectivity implies now that each face or vertex-figure of \mc P is isomorphic to a polygon as a poset.

A {\em rap-map} is a map between polyhedra preserving rank and adjacency, that is, each element is sent to an element of the same rank, and adjacent flags are sent to adjacent flags. An {\em automorphism } of a polyhedron \mc P  is a bijective rap-map of \mc P to itself. We denote the group of automorphisms of a polyhedron by $\Gamma(\Po)$ --- or simply by $\Gamma$ whenever there is no possibility of confusion --- and say that \mc P is {\em regular} if $\Gamma$ acts transitively on the set of flags of \mc P, denoted $\mc F(\Po)$. Familiar examples of geometric polyhedra whose face lattices are regular abstract polyhedra are the platonic solids and the regular tilings of the plane by triangles, squares or hexagons.

Throughout this paper we will use ``polyhedra'' to mean either the geometric objects or abstract polyhedra, as appropriate.

   A {\em string C-group} $G$ of rank $3$ is a group with distinguished
involutory generators $\rho_0, \rho_1, \rho_2$, where
$(\rho_0 \rho_2)^2 = id$, the identity in $G$, and $\langle \rho_0, \rho_1 \rangle \cap \langle \rho_1, \rho_2 \rangle
= \langle \rho_1 \rangle$  (this is called the {\em intersection condition}).

The automorphism group of an abstract regular polyhedron \mc P is always a string
C-group of rank $3$. In fact, given an arbitrarily chosen {\em base flag} $\Phi$ of \mc P, $\rho_{i}$ is taken to be the (unique) automorphism mapping $\Phi$ to the $i$-adjacent flag $\Phi^{i}$.
Furthermore, any string C-group of rank $3$ is the automorphism
group of an abstract regular
polyhedron \cite[Section 2E]{McMSch02}, so, up to isomorphism, there is a one-to-one correspondence between the string C-goups of rank $3$
and the abstract regular polyhedra.  Thus, in the study of regular abstract polyhedra we may either  work with the polyhedron as a poset, or with its automorphism group. 
We now review some of the relevant results and definitions from ~\cite{PelWil11,MonPelWil11a}.

The {\em monodromy group} $\mon{\mc P}:=\langle r_0, r_1, r_2 \rangle$ of a polyhedron $\mc P$ is the group of permutations on $\mc F(\mc P)$ generated by the maps $r_{i}:\Psi\mapsto\Psi^{i}$  (see \cite{HubOrbWei09}). It is important to note that these are {\em not} automorphisms of \mc P since they are not  adjacency preserving (compare the action of $r_{2}$ on $\Psi$ and $\Psi^{1}$). A string C-group $\Gamma=\langle \rho_{0},\rho_{1},\rho_{2}\rangle$ has a {\em flag action} on \mc P if there is a group homomorphism from $\Gamma\to\mon{\mc P}$ defined by $\rho_{i}\mapsto r_{i}$. Note also that the action of $r_{i}$ (and thus of the flag action) commutes with the automorphisms of any given polyhedron, so $(\Psi r_{i})\alpha=(\Psi \alpha)r_{i}$ and more generally, for all $w\in\mon{\mc P},\alpha\in\Gamma(\mc P)$ then $(\Psi w)\alpha=(\Psi\alpha)w$. Observe that in the case that there exists a group homomorphism from $\Gamma$ to $\mon(\mc P)$, it is contravariant since $(\Psi \rho_i) \rho_j = (\Psi^i) \rho_j = (\Psi \rho_j)^i = \Psi r_j r_i$. 

We say that the regular polyhedron $\mc P$ {\em covers}  $\mc Q$, denoted by $\mc P\searrow \mc Q$, if
$\mc Q$ admits a flag action from $\Gamma(\mc P)$. (This implies the notion of covering described in \cite[p. 43]{McMSch02}.) For example, if $p$ is the least common multiple of the co-degrees
of the faces of a polyhedron $\mc P$, and $q$ is the least common multiple of the vertex degree  of $\mc P$, then $\mc P$ is covered by the
tessellation \mc T of type $\{p, q\}$ whose automorphism group is isomorphic to the string Coxeter group
\[[p, q] := \langle \rho_0, \rho_1, \rho_2 \,|\, (\rho_0 \rho_2)^2 =
(\rho_0 \rho_1)^p = (\rho_1 \rho_2)^q = id \rangle.\]
Here the polyhedron \mc T can be viewed as a regular tessellation of the sphere, Euclidean plane or hyperbolic plane, depending on whether $\frac{1}{p}+\frac{1}{q}$ is bigger than, equal to, or less than $\frac{1}{2}$, respectively. We say that \mc P is a {\em minimal} regular cover of \mc Q if $\mc P\searrow\mc Q$ and if \mc R is any other regular polyhedron which covers \mc Q and is covered by \mc P, then $\mc P=\mc R$.

Let $\mc P\searrow\mc Q$, then by the main result of \cite{Har99} the structure of \mc Q is totally determined by the stabilizer $N$ of a specified base flag $\Phi\in\mc F(\mc Q)$ under the flag action of $\Gamma(\mc P)$. In fact, the elements of \mc Q are understood to be precisely the orbits of the elements of \mc P under the action of $N$.

Central to the identification and construction of minimal regular covers of polyhedra is the following theorem combining results from \cite{PelWil11,MonPelWil11a}.
\begin{theorem} Let \mc Q be an abstract polyhedron and \mon{\mc Q} its monodromy group. Then \mon{\mc Q} is a string C-group. Moreover, the regular abstract polytope \mc P associated with $\mon{\mc Q}$ is the minimal regular cover for \mc Q.
\label{th:monIsString}\end{theorem}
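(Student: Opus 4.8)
The plan is to prove the two assertions in turn: first that $\mon{\mc Q}=\langle r_0,r_1,r_2\rangle$ satisfies the three defining axioms of a string C-group of rank $3$, and then that the regular polyhedron $\mc P$ it determines is minimal among the regular covers of $\mc Q$. The first assertion is where the real content lies, and within it I expect the intersection condition to be the main obstacle; the minimality in the second assertion should then follow formally from a universal property of the monodromy group.

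For the first assertion, the two easy axioms come directly from the flag combinatorics. Each $r_i$ is an involution because taking the $i$-adjacent flag twice returns the original (this is immediate from P4), and $(r_0r_2)^2=\mathrm{id}$ because altering the rank-$0$ element and the rank-$2$ element of a flag are independent operations: both keep the edge fixed, so $\Psi r_0r_2=\Psi r_2r_0$ for every flag $\Psi$. The substantial point is the intersection condition $\langle r_0,r_1\rangle\cap\langle r_1,r_2\rangle=\langle r_1\rangle$, which I would prove by a local-to-global argument. The inclusion $\supseteq$ is trivial. For $\subseteq$, take $w$ in the intersection. Since $w\in\langle r_0,r_1\rangle$ it fixes the face of every flag, and since $w\in\langle r_1,r_2\rangle$ it fixes the vertex of every flag. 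Restricting $w$ to the flags of a single face $F$ places it in the monodromy group of the polygon $F$, which acts freely on those flags; since by P4 only $\Psi$ and $\Psi r_1$ share the vertex and face of $\Psi$, an element fixing every vertex of a polygon must be $\mathrm{id}$ or $r_1$. The identical argument on each vertex-figure shows $w$ restricts to $\mathrm{id}$ or $r_1$ there as well.

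To globalize, I would define $\epsilon(\Psi)\in\{0,1\}$ by $\Psi w=\Psi$ or $\Psi w=\Psi r_1$, which is well defined by the preceding remark. The local analysis shows $\epsilon$ is constant on the flags of each face and on the flags of each vertex-figure. Now $\Psi$ shares a face with both $\Psi r_0$ and $\Psi r_1$, and shares a vertex with $\Psi r_2$, so $\epsilon(\Psi r_i)=\epsilon(\Psi)$ for $i=0,1,2$; that is, $\epsilon$ is invariant under all three generators. Strong connectivity (P3) gives transitivity of $\mon{\mc Q}$ on flags, which then forces $\epsilon$ to be constant, so $w=\mathrm{id}$ or $w=r_1$. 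This establishes the intersection condition, and hence that $\mon{\mc Q}$ is a string C-group. The delicate part of this step is the bookkeeping ensuring that the restriction of $\langle r_0,r_1\rangle$ to the flags of $F$ really is the (freely acting) monodromy group of the polygon $F$, which is where the hypothesis that faces and vertex-figures are polygons is used.

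For the second assertion, the correspondence recalled in the excerpt yields a regular polyhedron $\mc P$ with $\Gamma(\mc P)=\mon{\mc Q}$ under $\rho_i\leftrightarrow r_i$, and the identity homomorphism $\Gamma(\mc P)\to\mon{\mc Q}$ is a flag action, so $\mc P\searrow\mc Q$. For minimality, suppose $\mc R$ is regular with $\mc P\searrow\mc R\searrow\mc Q$. Using the identification $\Gamma(\mc R)\cong\mon{\mc R}$ valid for the regular polyhedron $\mc R$, the two coverings furnish homomorphisms $\mon{\mc Q}\to\Gamma(\mc R)$ and $\Gamma(\mc R)\to\mon{\mc Q}$, each carrying generators to generators, whose composite is the identity on $\mon{\mc Q}$. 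The first map is therefore injective, and it is surjective because its image contains the generators of $\Gamma(\mc R)$; hence it is an isomorphism of string C-groups respecting generators, and $\mc R\cong\mc P$ by the one-to-one correspondence. I expect the only subtle point here to be keeping track of the contravariance of the flag action noted above, which merely replaces some homomorphisms by anti-homomorphisms and does not affect the injectivity or surjectivity conclusions.
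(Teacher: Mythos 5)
Your proof is correct, but it follows a genuinely different route from the one the paper relies on. The paper does not prove Theorem \ref{th:monIsString} in-text: it cites \cite{PelWil11,MonPelWil11a} and records that the argument there rests on the identification $\mon{\mc Q}\cong \Gamma/\mathrm{Core}(\Gamma,N)$, where $\Gamma$ is the automorphism group of \emph{any} regular cover (e.g.\ the universal tessellation $\{p,q\}$) and $N$ is a flag stabilizer; the string C-group property is then inherited by the quotient, and minimality is immediate because the core is the largest normal subgroup of $\Gamma$ contained in $N$ --- equivalently the kernel of the flag action --- so every regular cover of $\mc Q$ factors through $\Gamma/\mathrm{Core}(\Gamma,N)$. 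You instead work intrinsically with $\mon{\mc Q}$ as a permutation group on $\mc F(\mc Q)$: the involution and commutation relations from P4, and the intersection condition via the observation that an element of $\langle r_0,r_1\rangle\cap\langle r_1,r_2\rangle$ fixes the vertex and face of every flag, hence acts on the flags of each face (resp.\ vertex-figure) as an element of the simply transitive polygon monodromy sending every flag to itself or its $1$-adjacent flag, so that your parity function $\epsilon$ is locally constant and, by flag-connectivity, globally constant. Your minimality argument is likewise a universal-property argument (generator-preserving epimorphisms whose composite is the identity) rather than an appeal to the maximality of the core, though the two are two faces of the same fact. What your approach buys is self-containedness: it needs neither the prior existence of a regular cover nor the quotient criterion for string C-groups, and it isolates exactly where P3 and P4 enter. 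What the paper's approach buys is brevity and the minimality statement essentially for free. The two caveats you flag --- that the restriction of $\langle r_0,r_1\rangle$ to the flags on a face really is the freely acting polygon monodromy, and the contravariance bookkeeping in the covering maps --- are indeed the only delicate points, and you handle both adequately.
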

The proof of these facts in \cite{PelWil11} depends on the observation that
$\mon{\mc Q}\cong \Gamma/\text{Core}(\Gamma,N)$, where $\Gamma$ is the automorphism group of any regular cover of \mc Q, $N$ is the stabilizer in $\Gamma$ of a flag in \mc Q under the flag action of $\Gamma$ and the {\em core} is the largest normal subgroup of $\Gamma$ in $N$, denoted $\text{Core}(\Gamma,N)$.

It verges on folklore that whenever \mc P is regular, $\Gamma(\mc P) \cong \mon(\mc P)$ \cite{MonPelWil11a}.  
This leads to a useful reinterpretation of the condition for a regular polyhedron \mc P to be a cover of \mc Q. The fact that  \mc Q admits a flag action by $\Gamma(\mc P)$ is equivalent to observing that there is an epimorphism from $\mon {\mc P}$ to $\mon {\mc Q}$. Thus,
we find it more natural to understand the cover $\mc P\searrow \mc Q$ as an epimorphism of monodromy groups, instead of as a contravariant homomorphism from an automorphism group to a monodromy group. This perspective is motivated by the natural way in which $i$-adjacent flags of $\mc P$ are mapped into $i$-adjacent flags of $\mc Q$. Henceforth we shall proceed according to this notion and use the generators $r_0, r_1, r_2$ of $\mon{\mc P}$ instead of those of $\Gamma({\mc P})$ to denote the action on the flags of $\mc Q$.  For compactness of notation, we will frequently write $a,b$ or $c$ instead of $r_{0},r_{1}$ or $r_{2}$, respectively.

\section{On the Sufficiency of Generating Sets for Flag Stabilizers}
For a given abstract polyhedron $\mc P$, we define its flag graph $\mc{GF}(\mc P)$ as the
edge-labeled graph whose vertex set consists of all flags of $\Po$, where two vertices (flags) are joined by an edge labeled $i$
 if and only if they are $i$-adjacent for some $i=0, 1, 2$.

We recall a standard result from graph theory (see, e.g., \cite{Big71}):
\begin{theorem}Let $G$ and $G^{*}$ be dual planar graphs, and $T$ a spanning tree of $G$. Then the complement of the edges of $T$ is a spanning tree for $G^{*}$.
\end{theorem}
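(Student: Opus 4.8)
The plan is to use Euler's formula to pin down the edge count and then reduce everything to an acyclicity statement that follows from planar duality. Write $V$, $E$, $F$ for the numbers of vertices, edges, and faces of the connected plane graph $G$, so that $V - E + F = 2$. The dual $G^{*}$ then has $F$ vertices, $E$ edges, and is itself connected, and there is the usual bijection $e \mapsto e^{*}$ sending each edge of $G$ to the dual edge crossing it. Let $T^{*}$ denote the set of dual edges $\{e^{*} : e \notin T\}$, that is, the complement of $T$ transported to $G^{*}$. Since $T$ has $V-1$ edges, $T^{*}$ has $E - (V-1) = E - V + 1 = F - 1$ edges by Euler's formula, which is exactly one less than the number of vertices of $G^{*}$. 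Hence, once I show that $T^{*}$ is a forest, the standard count (a forest on $F$ vertices with $F-1$ edges is connected, hence a spanning tree) finishes the argument.

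The heart of the proof is therefore to show $T^{*}$ is acyclic. First I would suppose, for contradiction, that $T^{*}$ contains a cycle $Z$ in $G^{*}$. Drawn in the plane, $Z$ is a simple closed curve, so by the Jordan curve theorem it separates the plane into an interior and an exterior. The key observation is that the only edges of $G$ meeting $Z$ are the duals of the edges of $Z$, and each such edge crosses $Z$ exactly once, placing one of its endpoints inside $Z$ and the other outside. Because $Z$ is a nonempty cycle, at least one edge of $G$ is crossed, so both regions contain vertices of $G$. But every edge crossing $Z$ has its dual in $T^{*}$, meaning the edge itself lies outside $T$; thus no edge of $T$ crosses $Z$. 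Then $T$ cannot join an interior vertex to an exterior vertex, contradicting the connectivity of the spanning tree $T$. This contradiction shows $T^{*}$ has no cycle.

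I expect the main obstacle to be making the crossing correspondence precise: arguing cleanly that a cycle of $G^{*}$, realized as a Jordan curve, meets $G$ exactly in the edges dual to its own edges and crosses each of those transversally exactly once. This is intuitively clear from the planar embedding but deserves care, since it is where the topology (the Jordan curve theorem) and the combinatorics (the edge bijection) are glued together; everything else is bookkeeping with Euler's formula. As a remark, one could instead prove connectivity of $T^{*}$ directly, by showing that any two faces of $G$ are joined by a path of dual edges avoiding $T$, and again invoke the edge count; I favor the acyclicity route because the duality between cycles of $G^{*}$ and cuts of $G$ is the cleaner statement to verify.
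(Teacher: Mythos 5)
The paper does not prove this statement at all; it is quoted as a known result with a citation to Biggs, \emph{Spanning trees of dual graphs} (1971), so there is no in-paper argument to compare against. Your proof is the standard one and is correct: the count $|T^{*}| = E - (V-1) = F-1$ via Euler's formula, plus acyclicity of $T^{*}$ via the Jordan curve argument (a cycle of $G^{*}$ consisting only of duals of non-tree edges would separate the vertices of $G$ into two nonempty classes that $T$ cannot join), together force $T^{*}$ to be a spanning tree of $G^{*}$. The one point you rightly flag as needing care --- that a cycle of $G^{*}$, realized as a Jordan curve, meets $G$ precisely in the edges dual to its own edges, each crossed transversally exactly once --- is exactly the content of the standard planar embedding of the dual, and is where any rigorous treatment must invest its effort; note also that your argument handles the degenerate cases correctly (a loop of $G^{*}$ is dual to a bridge of $G$, which lies in every spanning tree, so such loops never occur in $T^{*}$). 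Your remark that one could instead prove connectivity of $T^{*}$ directly and conclude by the same edge count describes the usual alternative route; both are equally standard, and either would serve the paper's purposes, since all the paper needs downstream is that omitting the edges of a spanning tree of the flag graph yields a spanning tree of the dual.
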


We also recall the following useful theorems from \cite{PelWil10}:
\begin{theorem} Let $T$ be a spanning tree in the flag graph $\mc{GF}(\mc Q)$ of $\mc Q$ rooted at $\Phi$, a specified (base) flag of \mc Q. For each edge $e=(\Psi,\Upsilon)$ of $\mc{GF}(\mc Q)$, define the walk $\beta_{e}$ as the unique path from $\Phi$ to $\Psi$ in $T$, across $e$ and followed by the unique path from $\Upsilon$ to $\Phi$. Let $w_{\beta_{e}}$ be the word in $\Gamma$ inducing the walk $\beta_{e}$.
Then $S=\{w_{\beta_{e}}:e\in\mc{GF}(\mc Q)\setminus T\}$ is a generating set for $Stab_{\Gamma}(\Phi)$.\label{t:treeGenerates}\end{theorem}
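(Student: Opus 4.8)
The statement is Schreier's lemma transported to the flag graph, so the plan is to realize the fundamental cycles $\beta_e$ as Schreier generators for $N:=Stab_{\Gamma}(\Phi)$ and to carry out the associated telescoping identity explicitly.

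First I would record the structural facts that make the argument go. Since $\mc Q$ is strongly connected (P3), the flag graph $\mc{GF}(\mc Q)$ is connected, so a spanning tree $T$ rooted at $\Phi$ exists and $\Gamma$ acts transitively on $\mc F(\mc Q)$. Transitivity gives a bijection between flags and right cosets, $\Psi\leftrightarrow N t$ where $\Phi t=\Psi$; concretely, for each flag $\Psi$ let $t_{\Psi}$ be the word read off the unique $T$-path from $\Phi$ to $\Psi$ (so $t_{\Phi}$ is the empty word and $\Phi t_{\Psi}=\Psi$). Because every generator $r_i$ is an involution, the reverse $T$-path from $\Psi$ to $\Phi$ is induced by $t_{\Psi}^{-1}$; hence for a non-tree edge $e=(\Psi,\Upsilon)$ with label $i$ (so $\Upsilon=\Psi r_i$) the fundamental-cycle word is exactly $w_{\beta_e}=t_{\Psi}\,r_i\,t_{\Upsilon}^{-1}$. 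The family $\{t_{\Psi}\}$ is thus a Schreier transversal, and the $w_{\beta_e}$ are its Schreier generators.

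The inclusion $\langle S\rangle\subseteq N$ is immediate: each $\beta_e$ is a closed walk based at $\Phi$, so $\Phi\,w_{\beta_e}=\Phi$ and $w_{\beta_e}\in N$. For the reverse inclusion I would take $g\in N$, choose any word $w=r_{i_1}\cdots r_{i_k}$ representing $g$, and follow the induced walk $\Phi=\Psi_0,\Psi_1,\dots,\Psi_k=\Phi$, where $\Psi_j=\Psi_{j-1}r_{i_j}$ and $e_j=(\Psi_{j-1},\Psi_j)$ carries label $i_j$. Writing $t_j:=t_{\Psi_j}$ (so $t_0=t_k=1$, the latter because $g\in N$ forces $\Psi_k=\Phi$), the telescoping identity
\[
w=\prod_{j=1}^{k}\bigl(t_{j-1}\,r_{i_j}\,t_{j}^{-1}\bigr)
\]
holds in $\Gamma$ since the interior factors $t_j^{-1}t_j$ cancel and $t_0=t_k=1$. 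Each factor $t_{j-1} r_{i_j} t_j^{-1}$ is the word of the fundamental cycle of $e_j$: if $e_j\in T$ then $t_j=t_{j-1}r_{i_j}$ and the factor is trivial, while if $e_j\notin T$ the factor equals $w_{\beta_{e_j}}^{\pm1}$, the sign recording whether the walk crosses $e_j$ with or against its chosen orientation. Hence $g$ is a product of elements of $S\cup S^{-1}$, giving $N\subseteq\langle S\rangle$ and therefore that $S$ generates $N$.

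The only real subtlety — and hence the main obstacle — is bookkeeping with the conventions rather than any deep difficulty: one must consistently use the right flag action $\Psi\mapsto\Psi r_i$ (noting the contravariance flagged earlier), check that a traversal of a tree edge contributes the identity, and verify that crossing a non-tree edge against its reference orientation yields $w_{\beta_e}^{-1}$, which relies on each $r_i$ being an involution so that $(t_{\Psi}r_i t_{\Upsilon}^{-1})^{-1}=t_{\Upsilon} r_i t_{\Psi}^{-1}$. Once these orientation and involution conventions are pinned down, the telescoping identity is exact and the result is just Schreier's lemma applied to the transitive $\Gamma$-action on $\mc F(\mc Q)$.
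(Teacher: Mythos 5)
Your proof is correct and is precisely the Reidemeister--Schreier/Schreier's lemma argument that the paper itself identifies as the content of this theorem (the paper cites the result from an earlier work and remarks that it is ``essentially just a restatement of the Reidemeister--Schreier algorithm''). The telescoping identity, the tree-path transversal, and the involution bookkeeping are all handled correctly, so there is nothing to add.
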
 Note that this is essentially just a restatement of the Reidemeister-Schreier algorithm for finding a generating set for the stabilizer of a vertex in the automorphism group of {\em any} finite graph (c.f. ~\cite{Con92a}), however this theorem extends the result to countably infinite graphs in the natural way. While the generating sets for the stabilizer of a base flag obtained in Theorem \ref{t:treeGenerates} are handy, a more natural way to construct elements of the stabilizer of a base flag in a tiling come from {\em lollipop} walks, that is, walks from the base flag to the cell of the flag graph corresponding to a vertex, edge or face of the tiling (the {\em stem} of the walk), around that cell, and back along the same path. The following theorem allows us to construct a generating set for the stabilizer of a base flag in a tiling using this more natural construction.
\begin{theorem}\label{l:FVOneEnough} Let \mc Q be a finite polyhedron with planar flag graph, $\Phi$ be a base flag for \mc Q, and let $\mc P \searrow \mc Q$ with  $\Gamma:=\text{Aut}(\mc P)$. Then $\text{Stab}_{\Gamma}(\Phi)$ admits a generating set containing no more than one generator for each vertex and face of \mc Q.\begin{proof} Let $T$ be a spanning tree in $\mc G\mc F (\mc Q)$, and $T^{*}$ the spanning tree for $\mc G\mc F (\mc Q)^{*}$ corresponding to the omitted edges of $T$ in $\mc F\mc G(\mc Q)$. Let $\{g_{x}\}$ be a set of elements of $\Gamma$ corresponding to the vertices, edges and faces  of \mc Q such that each $g_{x}$ is of the form
\begin{equation}\label{eq:wiandgi}
g_{x}=w_{x}(r_{i}r_{j})^{q_{x}}w_{x}^{-1}
\end{equation}
with $q_{x}$ is the degree of the node $x$ in $\mc G\mc F(\mc Q)^{*}$ corresponding to a vertex, edge or face of $\mc Q$, and $w_{x}$ induces a walk on the flag graph from $\Phi$ to a flag on the corresponding vertex, edge or face entirely contained in $T$. Then each $g_{x}$ corresponds to a lollipop walk in $\mc G\mc F(\mc Q)$. Let $G=\{g_{x}|x\in\mc Q\}\cup\{g_{x}^{-1}|x\in\mc Q\}$.

To prove the result, it suffices to show that each of the $w_{\beta_{e}}$ of Theorem \ref{t:treeGenerates} induced by $T$ may be obtained as a suitably ordered product of elements from $G$.  Let $e$ be an omitted edge of $T$ in $\mc F\mc G(\mc Q)$. We say that a node $x$ of $T^{*}$ is {\em enclosed} by $\beta_{e}$ if it is contained in the region bounded by $\beta_{e}$. An edge of $T^{*}$ is {\em enclosed} by $\beta_{e}$ if either of its endpoints is. Our proof proceeds by induction on the number of edges enclosed by $\beta_{e}$.

Suppose $\beta_{e}$ encloses a single edge of $T^{*}$, then $\beta_{e}$ encloses a single node $u\in T^{*}$ corresponding to a cell $U$ of $\mc F\mc G(\mc Q)$. To see this it suffices to observe that $\beta_{e}$ crosses $e^{*}$, and bounds a simply connected region and so cannot contain both endpoints of $e^{*}$. Thus $w_{\beta_{e}}$ induces a walk in $T$ to a flag on $U$, around the cell and back again. Thus $w_{\beta_{e}}=g_{u}$ (or its inverse) for some choice of $u$.

As an inductive hypothesis, suppose that if a word $\beta_{e}$ encloses at most $k-1 \ge 1$ edges of the dual $T^{*}$ of the tree $T$ in $\mc F\mc G (\mc Q)$, then it can be expressed as a product elements of $G$ with respect to the tree $T$.

\begin{figure}[htbp]
\begin{center}
\includegraphics[width=2in]{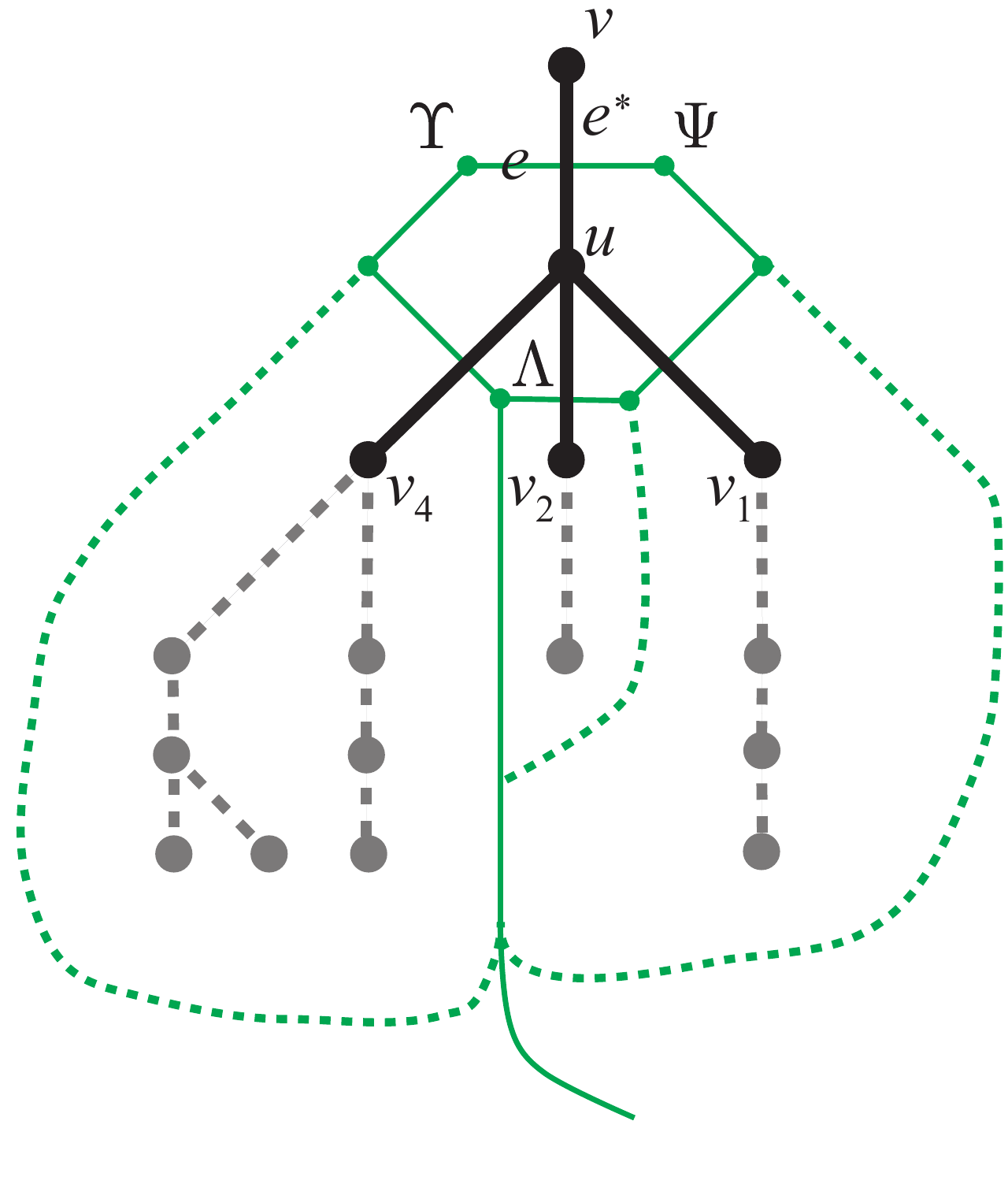}
\caption{A diagram of the inductive step in Theorem \ref{l:FVOneEnough} for an edge $e$ with node $u$ of degree $d=4$. Black and grey edges are in $T^{*}$, green edges are in $\mc G\mc F(\mc Q)$, solid green edges are traversed by $g_{u}$ and dashed grey edges correspond to portions of $T^{*}$ that may vary in size depending on the choice of $e$. Note that the ordering on the $v_{i}$ here corresponds to $w_{\beta_{e}}$ traversing the edge $e$ counterclockwise.}
\label{f:InductiveStep}
\end{center}
\end{figure}
Suppose $\beta_{e}$ encloses $k$ edges. Let $e^{*}$ be the edge in $T^{*}$ dual to $e$,  denote the endpoints of $e$ by $\Psi$ and $\Upsilon$ and without loss of generality suppose that $w_{\beta_{e}}$ traverses $e$ from $\Psi$ to $\Upsilon$ and let $u$ and $v$ be the endpoints of $e^{*}$, where $u$ is enclosed by $\beta_{e}$ and $v$ is not.  Let $U$ denote the cell of $\mc G\mc F(\mc Q)$ corresponding to $u$, and let $\Lambda$ denote the first vertex of $U$ traversed by $g_{u}$. The direction in which $w_{\beta_{e}}$ traverses the edge $e$ induces an orientation on the edges of $U$. Denote the degree of $u\in T^{*}$ by $d$. We index the edges incident to $u$ (other than $e^{*}$) and $e_{1}, e_{2},\ldots,e_{k-1},e_{k+1},e_{k+2},\ldots,e_{d}$ starting at $\Psi$ and running opposite the orientation induced on the edges of the cell $U$ in the order crossed so that $\Lambda$ is shared by $e_{k-1}$ and $e_{k+1}$. Label the dual edges with the corresponding labels, i.e., $e_{1}^{*},\ldots,e_{k-1}^{*},e_{k+1}^{*},\ldots,e_{d}^{*}$, and their other endpoints $v_{1},\ldots,v_{k-1},v_{k+1},\ldots,v_{d}$, respectively (see Figure \ref{f:InductiveStep}).
 By construction $\beta_{e}$ crosses only the edge $e^{*}$ of $T^{*}$ and so both endpoints of the remaining edges incident to $u$ in $T^{*}$ are enclosed by $\beta_{e}$. Let $T'$ be the enclosed edges of $T^{*}$ and their endpoints, and observe that $T'$ is a tree. Thus $S=T'\setminus \{u\}$ is a forest in $T^{*}$. Denote the components of $S$ by  $S^{1},S^{2},\ldots,S^{k-1},S^{k+1},\ldots S^{d-1}$ such that $v_{i}\in S^{i}$, and let $\overline{S_{i}}=S_{i}\cup \{e_{i}^{*},u\}$ for each of $i=1,\ldots,d-1$, $i \ne k$ (an example is shown in Figure \ref{f:SiBar}).
\begin{figure}[htbp]
\begin{center}
\includegraphics[width=2in]{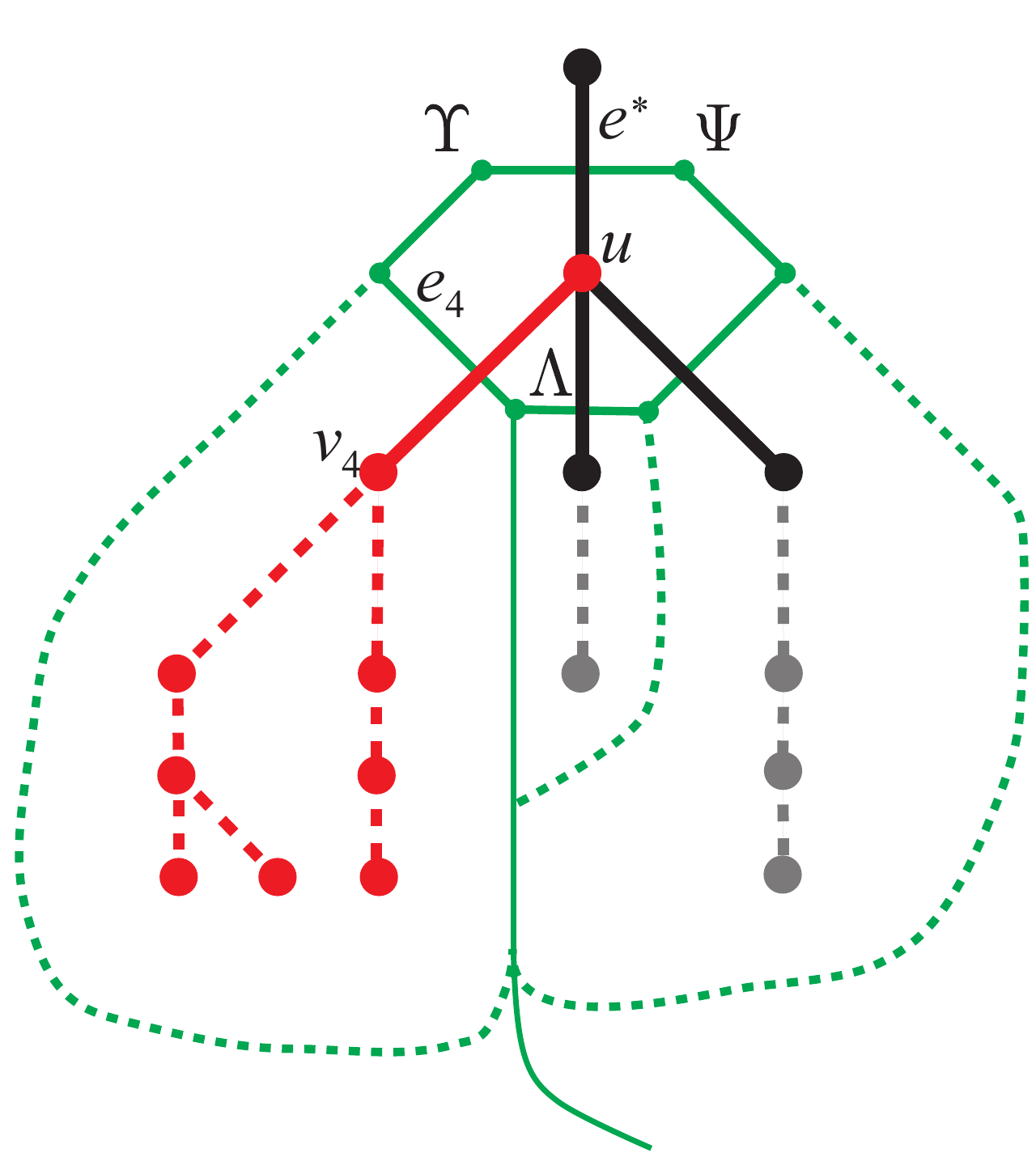}
\caption{An example shown in red of a subtree $\overline{S_{4}}$ obtained during the inductive step of Theorem \ref{l:FVOneEnough}, in this case corresponding to the vertex $v_{4}$.}
\label{f:SiBar}
\end{center}
\end{figure}

Observe that each of the trees $\overline{S_{i}}$ is the enclosed tree for the corresponding $\beta_{e_{i}}$, and so by the inductive hypothesis, each of the $w_{\beta_{e_{i}}}$ for $i=1,\ldots,d$ is equivalent to a product of the $g_{x}$ (or their inverses) suitably ordered. Also, $g_{u}\in G$ corresponds to a walk in $T$ to $\Lambda$, around  $U$, and back again. Thus $w_{\beta_{e}}$ is equal to the  product $$\displaystyle\left(\prod_{i=1}^{k-1}w_{\beta_{e_{i}}}^{s(i)}\right) g_{u}^{s(k)}\left(\prod_{i=k+1}^{d}w_{\beta_{e_{i}}}^{s(i)}\right),$$ where $s(i)$ is $\pm 1$ depending on whether the element or its inverse is required to keep the corresponding walks coherently oriented around the node $u$ (respecting the orientation induced by  $w_{\beta_{e}}$ as above). In the case of $w_{\beta_{e_{i}}}$, this would be opposite the orientation of the edge $e_{i}$, and in the case of $g_{u}$ this would be in the same orientation (see Figure \ref{f:SiOrientation}). \begin{figure}[htbp]
\begin{center}
\includegraphics[width=2in]{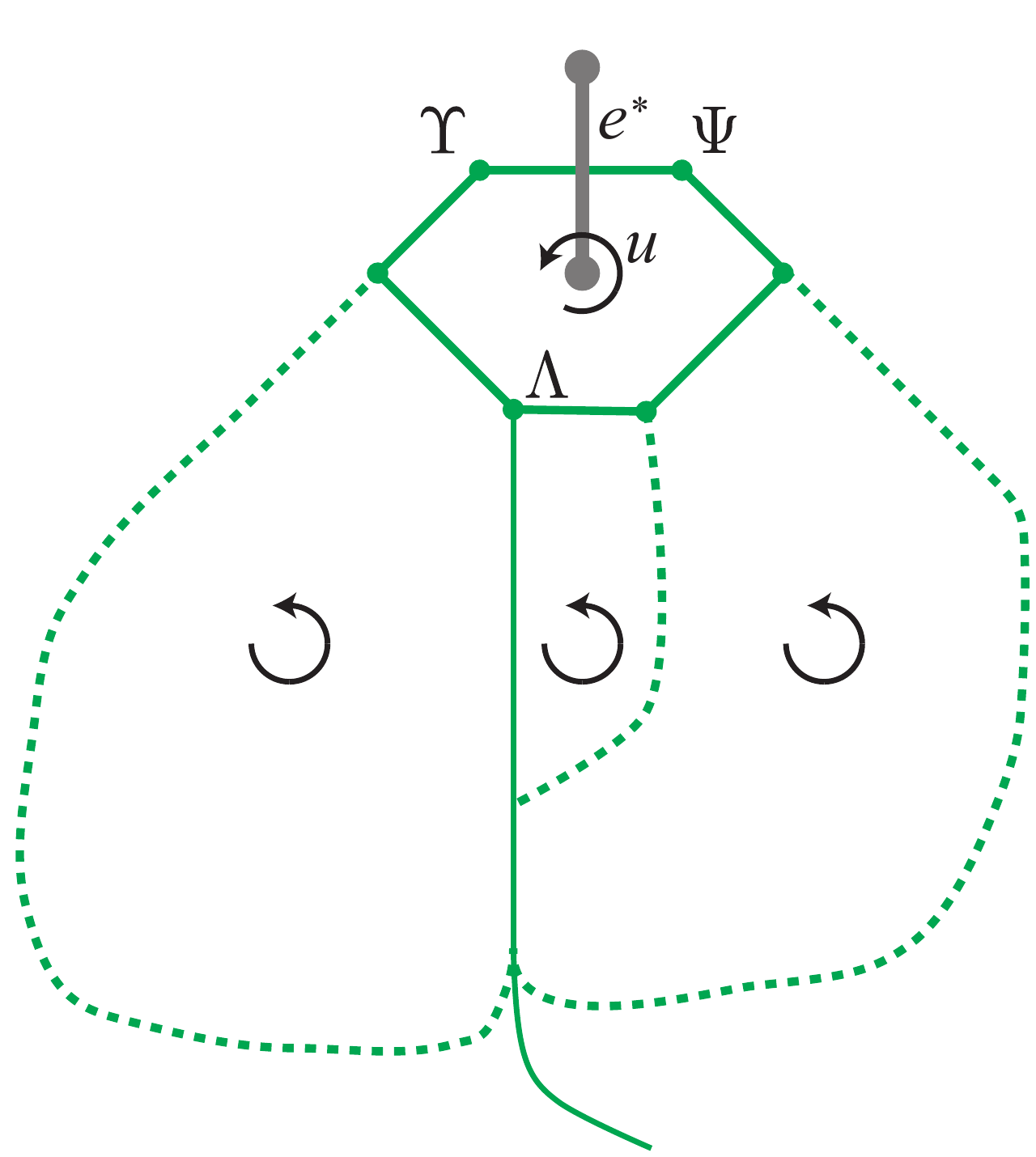}
\caption{Arrows indicate the orientation of traversal of the $w_{\beta_{e_{i}}}$ and $g_{u}$ used in the inductive step of Theorem \ref{l:FVOneEnough}.}
\label{f:SiOrientation}
\end{center}
\end{figure}
With such an orientation, group elements in the $w_{\beta_{e_{i}}}$ corresponding to shared edges in $\mc G\mc F(\mc Q)$ are cancelled by successive terms in the products and thus $w_{\beta_{e}}$ may be written as a product (suitably ordered) of elements of $G$.

Thus, by finite induction, any of the $w_{\beta_{e}}$ may be written as a product of elements in $G$. Note that any word corresponding to a walk from $\Phi$ in $T$ to a flag on a cell corresponding to an edge of $\mc Q$, around that cell and back is automatically trivial, and so may be safely omitted from the generating set for $Stab_{\Gamma}(\Phi)$. Likewise, we may omit the inverses from the description of our generating set and so we may conclude that $Stab_{\Gamma}(\Phi)=\langle g_{x}\rangle$ where $x$ is taken from the set of faces and vertices of $\mc Q$, as desired.
\end{proof}
\end{theorem}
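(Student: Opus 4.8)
The plan is to begin from the generating set for $\text{Stab}_\Gamma(\Phi)$ provided by Theorem \ref{t:treeGenerates} and to rewrite each of its generators as a product of a small number of ``lollipop'' words, one attached to each cell of the planar flag graph. Concretely, I would fix a spanning tree $T$ of $\mc{GF}(\mc Q)$ rooted at $\Phi$ and apply the recalled theorem on dual spanning trees to conclude that the edges omitted from $T$ are exactly the duals of a spanning tree $T^{*}$ of $\mc{GF}(\mc Q)^{*}$. The nodes of $T^{*}$ are the bounded faces of the planar embedding of $\mc{GF}(\mc Q)$, and for a flag graph these are precisely the cells that wind around the vertices, edges, and faces of $\mc Q$ (the $\{1,2\}$-, $\{0,2\}$-, and $\{0,1\}$-cells, respectively). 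For each such cell $x$ I attach a lollipop word $g_x$ that walks inside $T$ from $\Phi$ to a flag on the boundary of $x$, circuits the boundary of $x$ exactly once, and retraces the stem back to $\Phi$. Let $G$ be the collection of all these $g_x$ together with their inverses; this is the candidate generating set.

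The heart of the argument is to show that each generator $w_{\beta_e}$ of Theorem \ref{t:treeGenerates}, indexed by an omitted edge $e$, lies in $\langle G\rangle$. I would argue this by induction on the number of edges of $T^{*}$ enclosed by the closed walk $\beta_e$, using that $\beta_e$ bounds a simply connected region by planarity. In the base case $\beta_e$ encloses a single dual edge, hence it encloses a single node $u$ of $T^{*}$ (since $\beta_e$ crosses $e^{*}$ and a simply connected region cannot contain both endpoints of $e^{*}$), and then $w_{\beta_e}$ is literally the lollipop $g_u$ or its inverse. For the inductive step I take the unique enclosed endpoint $u$ of $e^{*}$, enumerate the edges $e_1,\dots,e_d$ of $T^{*}$ incident to $u$ other than $e^{*}$ in the cyclic order induced by the chosen orientation of $e$, and observe that each $e_i$, together with the forest component of $T^{*}\setminus\{u\}$ hanging from its far endpoint, forms a strictly smaller enclosed subtree $\ol{S_i}$ that is the enclosed tree of its own walk $\beta_{e_i}$. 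By the induction hypothesis each $w_{\beta_{e_i}}$ is a product of elements of $G$, and the claim is that $w_{\beta_e}$ equals the ordered product of the $w_{\beta_{e_i}}$ interleaved with $g_u$, each raised to a sign $s(i)=\pm1$.

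The step I expect to be the main obstacle is verifying this factorization, which is entirely a matter of orientation and cancellation bookkeeping. I must choose each sign $s(i)$ so that every constituent walk is traversed coherently around $u$ --- opposite to the orientation of $e_i$ for the $w_{\beta_{e_i}}$ and agreeing with the induced orientation for $g_u$ --- and then check that the edges of $\mc{GF}(\mc Q)$ shared between consecutive cells are each traversed once in each direction, so that these segments, together with the redundant in-tree stems of the $w_{\beta_{e_i}}$, cancel in pairs and leave exactly $\beta_e$. Getting the cyclic ordering and the signs consistent with a single global orientation is the delicate part; a careful diagram tracking the orientations of the $w_{\beta_{e_i}}$ and of $g_u$ about $u$ is what makes the cancellation transparent.

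Once the factorization is established, finite induction (valid because $\mc Q$, and hence $T^{*}$, is finite) yields $w_{\beta_e}\in\langle G\rangle$ for every omitted edge $e$, whence $\text{Stab}_\Gamma(\Phi)=\langle G\rangle$. To finish I would prune $G$: a lollipop around a $\{0,2\}$-cell, which corresponds to an edge of $\mc Q$, circuits a $4$-cycle and therefore equals a conjugate of $(r_0 r_2)^2=id$, so it is trivial and may be deleted, as may the now-redundant inverses. What remains is a generating set with at most one generator for each vertex and each face of $\mc Q$, as desired.
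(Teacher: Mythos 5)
Your proposal follows the paper's own argument essentially line for line: the same dual spanning tree $T^{*}$, the same lollipop generators $g_{x}$, the same induction on the number of enclosed dual edges with the same base case and the same decomposition of the enclosed tree into subtrees $\ol{S_i}$ hanging off $u$, and the same final pruning of the trivial edge-cell words and inverses. It is correct and takes the same route as the paper, with the orientation/cancellation bookkeeping correctly identified as the delicate point.
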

Observe that by the argument above, a generating set for $Stab_{\Gamma}(\Phi)$ may be obtained by identifying a spanning tree $T$ in the flag graph and a set of elements of $\Gamma$ corresponding to lollipop walks with stems in $T$ rooted at $\Phi$ about the faces and vertices of $\mc Q$. This provides an algorithm for finding a small generating set for $Stab_{\Gamma}(\Phi)$. We summarize this useful fact in the following corollary.
\begin{corollary} Let $\mc Q, \mc P,\Phi, \Gamma$ as in Theorem \ref{l:FVOneEnough}. Let $T$ be a spanning tree in $\mc G\mc F(\mc Q)$. Let $G=\{g_{x}\}$ a set of elements of $\Gamma$ indexed by the set of faces and vertices in \mc Q of the form $$g_{x}=w_{x}(r_{i}r_{i+1})^{q_{x}}w_{x}^{-1}$$ with $q_{x}$ the degree of $x\in\mc G\mc F(\mc Q)^{*}$ and $w_{x}$ induces a walk in $T$ from $\Phi$ to a flag on $x$. The $Stab_{\Gamma}(\Phi)=\langle G\rangle$.\label{c:FVCorollary}
\end{corollary}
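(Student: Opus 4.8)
The plan is to obtain this almost directly from Theorem~\ref{l:FVOneEnough} and its proof, since the corollary merely repackages the constructive content of that argument. The proof of Theorem~\ref{l:FVOneEnough} begins with an \emph{arbitrary} spanning tree $T$ of $\mc G\mc F(\mc Q)$ rooted at $\Phi$, forms the lollipop elements $g_x$ of the shape~\eqref{eq:wiandgi} (one for each vertex, edge and face of $\mc Q$), and then shows---via Theorem~\ref{t:treeGenerates} together with the enclosed-edge induction---that every Reidemeister--Schreier generator $w_{\beta_e}$ is a suitably ordered product of these $g_x$ and their inverses. Since that argument uses no property of $T$ beyond its being a spanning tree rooted at $\Phi$, I would first observe that the entire construction applies verbatim to the tree $T$ named in the statement of the corollary, so that the $g_x$ already generate $\mathrm{Stab}_{\Gamma}(\Phi)$.

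The only substantive point is to confirm that the surviving vertex and face lollipops carry the asserted form with consecutive indices $i\in\{0,1\}$. For this I would classify the cells of the planar flag graph by their edge labels. By the diamond condition the flags incident to a fixed vertex $V$ of $\mc Q$ form a single cycle whose edges alternate the labels $1$ and $2$ (altering a flag's edge or its face preserves its vertex), the flags incident to a fixed face $F$ form a cycle alternating the labels $0$ and $1$, and the flags incident to a fixed edge $E$ form a $4$-cycle alternating the labels $0$ and $2$. Consequently a single circuit of the cell about $V$ is induced by a power of $r_1r_2$ and a single circuit about $F$ by a power of $r_0r_1$; taking $w_x$ to be a word inducing a path in $T$ from $\Phi$ onto the relevant cell and $q_x$ the exponent already recorded in~\eqref{eq:wiandgi}, the vertex and face elements are exactly $w_x(r_ir_{i+1})^{q_x}w_x^{-1}$ with $i=1$ and $i=0$ respectively, which is precisely the form demanded by the corollary.

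It then remains to discard the edge lollipops, and here I would reuse the observation made at the end of the proof of Theorem~\ref{l:FVOneEnough}. The bounding cycle of an edge cell has length $4$, so the associated element is $w_x(r_0r_2)^{2}w_x^{-1}$; since $r_0$ and $r_2$ act on independent ranks we have $r_0r_2=r_2r_0$, equivalently $(\rho_0\rho_2)^2=\mathrm{id}$ in the string C-group, so $(r_0r_2)^2=\mathrm{id}$ in $\mon{\mc Q}$ and every edge lollipop is trivial. Thus the edge generators contribute nothing and may be deleted from $G$, leaving a set indexed only by the vertices and faces of $\mc Q$. Combined with the first paragraph this yields $\mathrm{Stab}_{\Gamma}(\Phi)=\langle G\rangle$, as required.

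I expect the only delicate part to be bookkeeping rather than mathematics: matching the orientation and index conventions so that $(r_ir_{i+1})^{q_x}$ traverses each cell exactly once in the sense compatible with the walks $w_x$, and checking that $q_x$ is the degree of the corresponding node of $\mc G\mc F(\mc Q)^{*}$ as stated in~\eqref{eq:wiandgi}. Since Theorem~\ref{l:FVOneEnough} has already carried out the genuine work---the inductive cancellation of shared edges across successive lollipops---no new estimate or construction is needed, and the corollary is essentially a bookkeeping restatement tailored for use as an algorithm.
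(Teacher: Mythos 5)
Your proposal is correct and follows exactly the route the paper takes: the paper offers no separate proof of Corollary \ref{c:FVCorollary}, deriving it directly from the inductive argument in Theorem \ref{l:FVOneEnough} together with the observation there that edge lollipops are trivial because $(r_0r_2)^2=\mathrm{id}$. Your additional bookkeeping (classifying cells by their alternating edge labels to confirm the form $(r_ir_{i+1})^{q_x}$) is a harmless elaboration of what the paper leaves implicit.
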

Note that in the case of  uniform polyhedra (such as prisms and antiprisms) the generating set may be reduced even further. Suppose \mc Q is a uniform polyhedron with regular cover \mc P such that  $\mc P$ has Schl\"afli type $\{p,q\}$ where $q$ is the degree of any (every) vertex in $\mc Q$. Then each of the $g_{x}$ corresponding to a vertex of $\mc Q$ in the argument above is  trivial in $\Gamma=Aut(\mc P)$ and so may be safely omitted from the generating set for $Stab_{\Gamma}(\Phi)$.

\section{Prisms}
Let $\Gamma$ be the universal regular cover of the $n$-prism ($n=3$ or $n \ge 5$), that is, the Coxeter group
\[\langle a, b, c \,|\, a^2 = b^2 = c^2 = (ac)^2 = (bc)^3 = (ab)^{\text{l.c.m.}(4,n)} = id.\rangle\]
We define type $A$ to be those flags containing a square and an edge contained in an $n$-gon. We let $g_{-1} := (ab)^{-4}$, and in general $g_k := cb(ab)^{k} c (ab)^4 c (ba)^{k} bc$ for $k = 0, 1, \dots, n-2$. We further let $h_n := c(ab)^nc$. 
\begin{proposition}\label{p:flagstabprism}
Let $g_{i},h_{n}$ as stated above, then
\[Stab_A = \langle g_k, h_n \,|\, k = -1, \dots, n-2\rangle\] for any base flag of type $A$.\end{proposition}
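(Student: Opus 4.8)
The plan is to derive the statement directly from Corollary~\ref{c:FVCorollary}, which expresses $\text{Stab}_\Gamma(\Phi)$ as the subgroup generated by lollipop walks about the faces and vertices of the prism, with stems lying in a chosen spanning tree $\mathcal{T}$ of the flag graph $\mathcal{GF}(\mathcal{Q})$. First I would fix combinatorial coordinates for the $n$-prism: label the top and bottom $n$-gons $0,\dots,n-1$ and $0',\dots,(n-1)'$, write $S_k$ for the square on $k,k+1,(k+1)',k'$, and take the base flag to be the type-$A$ flag $\Phi=(0,\{0,1\},S_0)$. Recording the generators as flag permutations --- $a=r_0$ exchanges the vertex, $b=r_1$ exchanges the edge inside its face, and $c=r_2$ exchanges the face about its edge --- supplies the bookkeeping needed below. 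Since all type-$A$ flags form a single orbit under the symmetries of the prism, it is enough to treat this $\Phi$; a general type-$A$ base flag follows by conjugation.

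The second step is to choose $\mathcal{T}$ so that the face lollipops are exactly the stated words. I would take $\mathcal{T}$ to be the ``broom'' that crosses $c$ onto the top $n$-gon and then runs around it, dropping one tree edge into each square, extended arbitrarily to a full spanning tree. Tracing the monodromy along the stem $w_k:=cb(ab)^kc$ shows it terminates on a flag of the square $S_{n-k-1}$, so the lollipop about that square is $w_k(ab)^4w_k^{-1}=cb(ab)^kc\,(ab)^4\,c(ba)^kbc=g_k$ for $k=0,\dots,n-2$; the empty stem yields $g_{-1}=(ab)^{-4}$ about $S_0$, and the stem $c$ yields $h_n=c(ab)^nc$ about the top $n$-gon. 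Thus $g_{-1},g_0,\dots,g_{n-2}$ account for all $n$ squares and $h_n$ for the top $n$-gon.

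It then remains to discard every remaining generator that Corollary~\ref{c:FVCorollary} produces. The corollary already indexes generators only by faces and vertices, and --- this is precisely the uniform-polyhedron reduction recorded just after the corollary --- each vertex lollipop is trivial because every vertex of the prism has degree $3$ while $(bc)^3=id$ in $\Gamma$. The single surviving generator beyond the stated list is the lollipop about the bottom $n$-gon, and the heart of the argument is to show that it is redundant. I would obtain this from the relation carried by the planar (spherical) embedding of the flag graph: the suitably ordered and conjugated product of all the two-cell boundaries is trivial, so after deleting the trivial vertex lollipops the bottom-$n$-gon lollipop is exhibited as a word in $g_{-1},\dots,g_{n-2},h_n$. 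Intuitively, sweeping the broom around every square together with the top polygon already encircles the bottom polygon.

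The main obstacle is twofold. The routine but error-prone part is the verification in the second step that the stems genuinely lie in one consistent spanning tree and that the induced orientations reproduce exactly the exponents and the single negative sign appearing in $g_{-1}=(ab)^{-4}$ and in the $g_k$. The more delicate, conceptual part is making the redundancy of the bottom-$n$-gon lollipop precise: identifying the exact ordered product of the $g_k$ and $h_n$ equal to it, rather than merely appealing to the fact that the spherical embedding of $\mathcal{GF}(\mathcal{Q})$ (which has $12n$ flags, $18n$ edges and $6n+2$ two-cells, so Euler characteristic $2$) carries a single global relation among its two-cell boundaries. Once these two points are settled, the asserted generating set follows at once.
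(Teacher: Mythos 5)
Your overall strategy coincides with the paper's: both proofs invoke Corollary~\ref{c:FVCorollary} with a spanning tree of the flag graph, discard the vertex lollipops because every vertex has degree $3$ and $(bc)^3=id$ in $\Gamma$, identify the $g_k$ and $h_n$ as the face lollipops for the squares and the top $n$-gon, and reduce everything to showing that the one remaining generator --- the lollipop about the bottom $n$-gon, which with the paper's tree is the word $babc(ab)^ncbab$ --- is redundant. Where you diverge is in how that redundancy is established. The paper closes the step by an explicit computation: it exhibits the identity $g_{-1}^{-1}h_n\bigl(\prod_{i=n-2}^{1}g_i\bigr)g_0 = babc(ab)^ncbab$ through a page of word manipulations in $[\mathrm{l.c.m.}(4,n),3]$. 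You instead appeal to the global relation carried by the spherical embedding of $\mathcal{GF}(\mathcal{Q})$: the suitably ordered product of all cell lollipops is trivial, so after deleting the trivial vertex and edge lollipops the bottom-$n$-gon lollipop is an ordered product of the $g_k^{\pm 1}$ and $h_n^{\pm 1}$. That idea is sound and arguably cleaner --- it is exactly the inductive mechanism of Theorem~\ref{l:FVOneEnough} run with the ``enclosed'' region taken to be the component of the complement of $\beta_e$ that misses the bottom-$n$-gon cell, and since the conjugating stems lie in $T$ the resulting factors are the generators themselves rather than arbitrary conjugates, so one really does land in $\langle g_k, h_n\rangle$ and not merely in a normal closure. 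What each approach buys: yours avoids the long $n$-dependent calculation and makes the redundancy structurally transparent; the paper's explicit product is more work but is then reused downstream (the same words $g_k$, $h_n$ and their interactions are manipulated again in the passage to $Stab_A/Cl^*$), so having them in closed form is not wasted effort.

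One caution: as submitted, you have named the decisive step (``identifying the exact ordered product \dots rather than merely appealing to'' the Euler-characteristic count) as your main obstacle rather than carrying it out, and you have likewise not verified that your ``broom'' stems assemble into a single spanning tree. Neither is a wrong turn --- both are settled by the machinery the paper has already proved and by its Figure~\ref{f:prism tree} --- but a referee would ask you either to run the Theorem~\ref{l:FVOneEnough} induction once on the complementary region explicitly, or to write down the product as the paper does, before accepting the proposition as proved.
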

\begin{proof} It is immediately clear that the given elements are in the stabilizer of flags of type $A$ since they correspond to walks around either one of the bases ($h_{n}$) or to each of the square faces ($g_{k}$). Since all of the words corresponding to a walk around a vertex are trivial in $[\text{l.c.m.}(4,n),3]$, by Corollary \ref{c:FVCorollary} and the spanning tree $T$ given in Figure \ref{f:prism tree}, we know that $Stab_{A}$ is generated by $G=\{g_{k},h_{n},babc(ab)^{n}cbab\}$, since each $g\in G$ corresponds to a lollipop walk for a face of the prism with stem in $T$. It suffices therefore to demonstrate that $babc(ab)^{n}cbab$ is equivalent to some product of the $g_{k}$ and $h_{n}$.

\begin{figure}[htbp]
\begin{center}
\includegraphics[width=\textwidth]{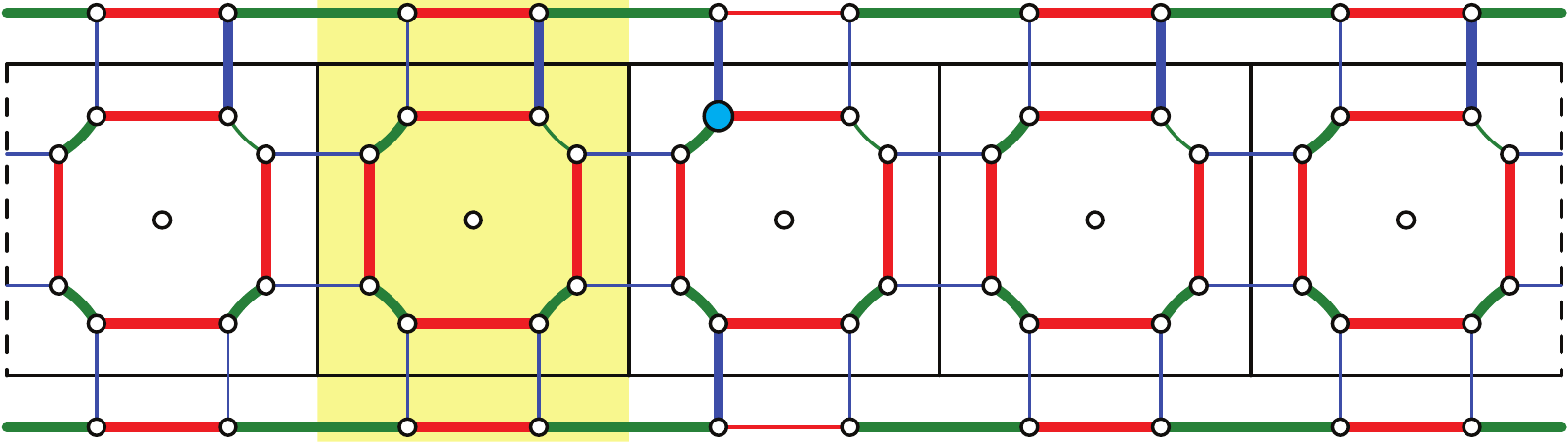}
\caption{The spanning tree of the flag graph for a 5-prism (dashed edges of are identified to construct the prism). Bold edges correspond to edges of the spanning tree $T$, while colored hairline edges correspond to the the edges in $\mc F\mc G(\mc Q)\setminus T$. The tree can be extended to arbitrary $n$-prisms by introducing (or removing) additional copies of the yellow highlighted region. Edge colors indicate the type of adjacency relationship on the flags, i.e., red corresponds to the action of $r_{0}$, green to the action of $r_{1}$ and blue to $r_{2}$.}
\label{f:prism tree}
\end{center}
\end{figure}

Consider now the product $\gamma=g_{-1}^{-1}h_{n}(\prod_{i=n-2}^{1}g_{i})g_{0}$. We first observe that
\begin{align*}
\prod_{i=n-2}^{1}g_{i}&=\prod_{i=n-2}^{1}cb(ab)^{i}c(ab)^{4}c(ba)^{i}bc=cb \left(\prod_{i=n-2}^{1}(ab)^{i}c(ab)^{4}c(ba)^{i}\right)bc\\
&=cb (ab)^{n-2}\left(\prod_{i=n-2}^{1}c(ab)^{4}cba\right)bc=cb (ab)^{n-2}\left(acb(ab)^{3}cba\right)^{n-2}bc\\
&=cb (ab)^{n-2}a\left(cb(ab)^{2}abcb\right)^{n-2}abc=c (ba)^{n-1}\left(cb(ab)^{2}acbc\right)^{n-2}abc\\
&=c (ba)^{n-1}cb\left((ab)^{2}ac\right)^{n-2}bcabc=c (ba)^{n-1}cb\left(ababca\right)^{n-2}bcabc\\
&=c (ba)^{n-1}cba\left(babc\right)^{n-2}abcabc.
\end{align*}
Thus  $\prod_{i=n-2}^{1}g_{i}$ corresponds to the walk depicted in Figure \ref{f:prismwalk1}. \begin{figure}[htbp]
\begin{center}
\includegraphics[width=\textwidth]{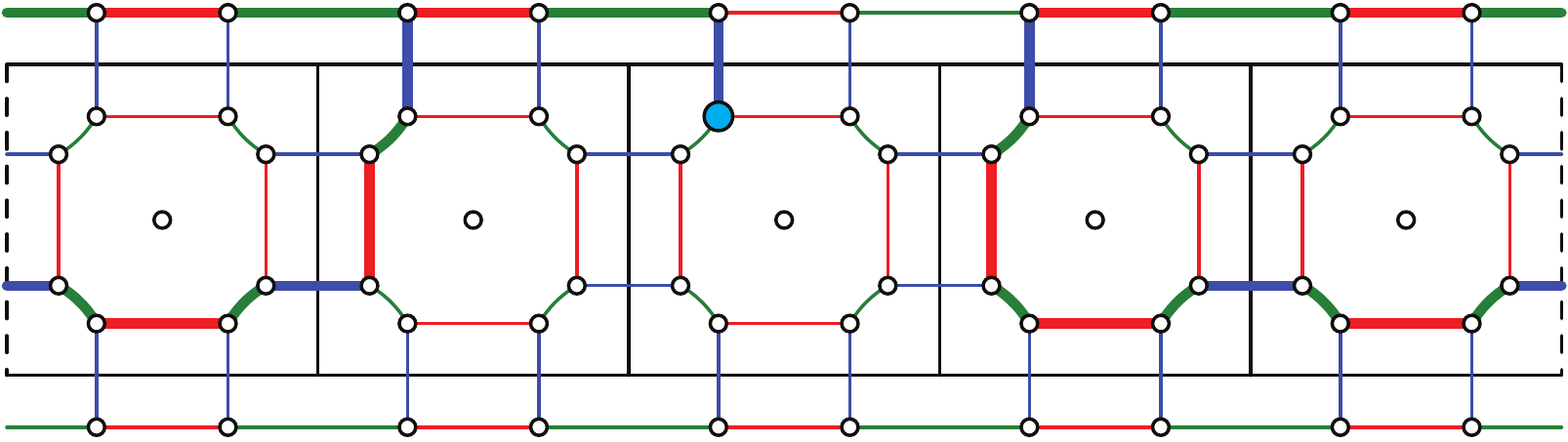}
\caption{The walk corresponding to $\prod_{i=n-2}^{1}g_{i}$ on the 5-prism, where $n=5$.}
\label{f:prismwalk1}
\end{center}
\end{figure}
We now observe that if we multiply this product on the left by $h_{n}$ and on the right by $g_{0}$ we obtain
\begin{align*} h_{n}\prod_{i=n-2}^{1}g_{i} g_{0}&=c(ab)^{n}c\cdot c (ba)^{n-1}cba\left(babc\right)^{n-2}abcabc\cdot cbc (ab)^{4}cbc\\
&=cabcba\left(babc\right)^{n-2}abcac (ab)^{4}cbc=acbcba\left(babc\right)^{n-2}aba (ab)^{4}cbc\\
&=abca\left(babc\right)^{n-2} (ba)^{2}bcbc=abac\left(babc\right)^{n-2} (ba)^{2}bcbc.
\end{align*}
This corresponds to the walk in the flag graph of the prism shown in Figure \ref{f:prismwalk2}.
\begin{figure}[htbp]
\begin{center}
\includegraphics[width=\textwidth]{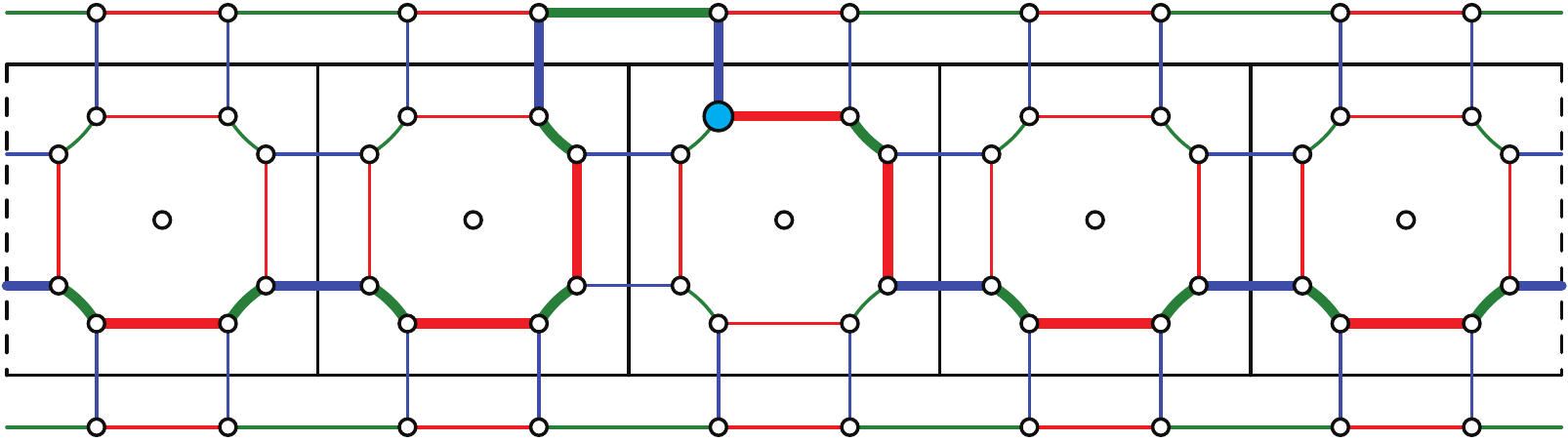}
\caption{The walk in the flag graph of the 5-prism corresponding to $abac\left(babc\right)^{n-2} (ba)^{2}bcbc$, where $n=5$.}
\label{f:prismwalk2}
\end{center}
\end{figure}
 We now multiply on the left by $g_{-1}^{-1}$, obtaining\begin{align*}
\gamma &=(ba)^{4}\cdot abac\left(babc\right)^{n-2} (ba)^{2}bcbc\\&=bababc(babc)^{n-2}babacb=ba(babc)^{n-1}babcab=ba(babc)^{n}ab\\
&=bab(abcb)^{n-1}abcab=bab(acbc)^{n-1}abcab=bab(cabc)^{n-1}abcab\\& =babc(ab)^{n-1}cabcab=babc(ab)^{n-1}acbcab=babc(ab)^{n-1}abcbab=babc(ab)^{n}cbab\end{align*} as desired.

\end{proof}

\subsection{Minimal Regular Cover of the $n$-Prism} We now proceed to prove that $\langle a,b,c|a^{2}=b^{2}=c^{2}=(ab)^{\text{\text{l.c.m.}}(4,n)}=(bc)^{3}=(ac)^{2}=w=id\rangle$, where $w=(c(ab)^{2}c(ab)^{3})^{2}$, is the automorphism group of the minimal regular cover of the $n$-prism for arbitrary values of $n\in\NN, n\ge 3$.
Let $\Gamma=\langle a,b,c|a^{2}=b^{2}=c^{2}=(ab)^{\text{\text{l.c.m.}}(4,n)}=(bc)^{3}=(ac)^{2}=id\rangle$ and denote $Cl_{\Gamma}(w)$ by $Cl^*$.

We now show that $g_k \cdot Cl^* = g_{k-2}^{-1} \cdot Cl^*$ for $i=1, \cdots, n-1$.
First we rewrite
\begin{eqnarray*}
g_k &=& cb(ab)^{k} c (ab)^4 c (ba)^{k} bc\\
&=& c(ba)^{k+1} c(ba)^{3} bcb(ab)^{k}c\\
&=& c(ba)^{k+1} c(ba)^{3} cbc(ab)^{k}c.
\end{eqnarray*}

Observe that the equation $w = id$ is equivalent to $(ba)^2c(ba)^3cb = c(ab)^3caba$.
Then,
\begin{eqnarray*}
g_k \cdot Cl^*&=& c(ba)^{k-1} ((ba)^2c(ba)^3cb)c(ab)^kc \cdot Cl^*= c(ba)^{k-1} (c(ab)^3caba)c(ab)^kc \cdot Cl^*\\
&=& c(ba)^{k-1}c(ab)^3acbcaab(ab)^{k-1}c \cdot Cl^* = c(ba)^{k-1}c(ab)^3abcbb(ab)^{k-1}c \cdot Cl^*\\
&=& c(ba)^{k-1}c(ab)^4c(ab)^{k-1}c \cdot Cl^* = c(ba)^{k-2}bacab(ab)^3c(ab)^{k-1}c \cdot Cl^*\\
&=& c(ba)^{k-2}bcb(ab)^3c(ab)^{k-1}c \cdot Cl^* = g_{k-2}^{-1} \cdot Cl^*.
\end{eqnarray*}
As a consequence,
\begin{equation}\label{eq:generatorsCl1}
Stab_A/Cl^* = \langle g_{-1} \cdot Cl^*, g_0 \cdot Cl^*, h_n \cdot Cl^*\rangle.
\end{equation}

In what follows we describe the action of the generators listed in (\ref{eq:generatorsCl1}) on the flags of type different from A. Let flags of type B be the ones containing an edge between two squares, and flags of type C those containing an $n$-gon.

Note that $g_{-1} \cdot Cl^*$ fixes flags of type B, and acts on each flag type C like a 4-step rotation of the prism. Flags of type C remain fixed under $g_0 \cdot Cl^*$ whereas each flag type B is mapped to its image under a 4-step rotation on the prism. Finally, the action of $h_n \cdot Cl^*$ on flags of type B and C depends on the congruence of $n$ $(mod \,\,4)$. If $n \equiv 0$, then $h_n$ is a trivial word in $\Gamma$; if $n \equiv 2$, then $h_n \cdot Cl^*$ acts on each flag type B and C like a half-turn with center in an adjacent square of the prism; and if $n \equiv 1, 3$ then $h_n \cdot Cl^*$ interchanges flag-orbits B and C.

We now center our attention to the $4n$-prisms. As observed in the previous paragraph, in this case $h_n$ is trivial and $Stab_A / Cl^*$ is generated by the two elements $g_{-1} \cdot Cl^*$ and $g_0 \cdot Cl^*$. Assume that an element $x = g_{-1}^{a_1} g_0^{b_1} g_{-1}^{a_2} g_0^{b_2} \cdots g_{-1}^{a_m} g_0^{b_m} \cdot Cl^* \in Stab_A / Cl^*$ acts trivially on all flags of types B and C. Because of the action described in the previous paragraph, we have that $\sum_i a_i \equiv \sum_i b_i \equiv 0$ $(mod \,\, n)$. Conversely, every word in $Stab_A / Cl^*$ satisfying the congruence relation just described acts trivially on all flags of types B and C and belongs to the core of $Stab_A$ on $\Gamma$.

\begin{proposition}
The automorphism group of the minimal regular cover of the $4n$-prism is given by the Coxeter group $[4n, 3]$ subject to the single extra relation $$(c(ab)^2c(ab)^3)^2~=~id.$$
\end{proposition}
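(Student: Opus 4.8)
The plan is to identify the claimed group with the monodromy group of the $4n$-prism $\mc Q$ and then quote Theorem~\ref{th:monIsString}. First I would note that the universal regular cover of the $4n$-prism has automorphism group $\Gamma=[4n,3]$: its $2$-faces are squares and $4n$-gons, so the least common multiple of the co-degrees equals $4n$, while every vertex has degree $3$. By Theorem~\ref{th:monIsString} the automorphism group of the minimal regular cover is $\mon{\mc Q}\cong\text{Core}(\Gamma,N)$-free, namely $\Gamma/\text{Core}(\Gamma,N)$ with $N=Stab_A$, so the whole statement reduces to the single identity
\[\text{Core}(\Gamma,N)=Cl^*.\]

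For the inclusion $Cl^*\subseteq\text{Core}(\Gamma,N)$ I would simply verify that $w$ fixes every flag of $\mc Q$; as $\text{Core}(\Gamma,N)$ is the (normal) kernel of the flag action and $Cl^*$ is the normal closure of $w$, this suffices, and it also legitimizes the quotients $Stab_A/Cl^*$ and $\text{Core}(\Gamma,N)/Cl^*$ used below. The reverse inclusion is where the work lies. Here I would invoke the analysis already completed in this section: for the $4n$-prism $h_n$ is trivial, so $Stab_A/Cl^*=\langle g_{-1}Cl^*,g_0Cl^*\rangle$, and $\text{Core}(\Gamma,N)/Cl^*$ is exactly the subgroup of words $g_{-1}^{a_1}g_0^{b_1}\cdots g_{-1}^{a_m}g_0^{b_m}Cl^*$ with $\sum_i a_i\equiv\sum_i b_i\equiv 0\pmod n$. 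Thus it remains only to show that this congruence subgroup is trivial, i.e.\ that every such word already belongs to $Cl^*$.

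To finish I would prove that $Stab_A/Cl^*$ is abelian and isomorphic to $\ZZ_n\times\ZZ_n$. Two ingredients are immediate. Reading off from the action recalled above the pair $(\sum_i a_i,\sum_i b_i)\bmod n$ — the number of steps by which a word rotates the type-C and type-B flags — defines a surjective homomorphism $\phi\colon Stab_A/Cl^*\to\ZZ_n\times\ZZ_n$ whose kernel is precisely the congruence subgroup; in particular $|Stab_A/Cl^*|\ge n^2$. Both generators already have order dividing $n$ in $\Gamma$: from $g_{-1}=(ab)^{-4}$ we get $g_{-1}^{n}=(ab)^{-4n}=id$, and since $cbc$ is an involution, $g_0=cbc\,(ab)^4\,cbc$ is conjugate to $(ab)^4$ and hence also satisfies $g_0^{n}=id$. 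If in addition $g_{-1}Cl^*$ and $g_0Cl^*$ commute, then every element of $Stab_A/Cl^*$ equals $g_{-1}^{s}g_0^{t}Cl^*$, the group is a quotient of $\ZZ_n\times\ZZ_n$, the surjection $\phi$ must be an isomorphism, and $\ker\phi$ — the congruence subgroup — is trivial. This gives $\text{Core}(\Gamma,N)=Cl^*$, and Theorem~\ref{th:monIsString} then identifies $[4n,3]/Cl^*$ as the automorphism group of the minimal regular cover, as claimed.

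The main obstacle is exactly the commutativity $[g_{-1},g_0]\in Cl^*$. It cannot be deduced from the flag action, since that action only certifies membership in $\text{Core}(\Gamma,N)$ — which is what we are trying to determine — so it must be a genuine word computation in $\Gamma/Cl^*$ powered by $w$, in the same spirit as the rewriting $(ba)^2c(ba)^3cb=c(ab)^3caba$ and the derivation of $g_kCl^*=g_{k-2}^{-1}Cl^*$ carried out above. Since those relations only link generators of equal parity in $k$, they do not by themselves relate $g_{-1}$ and $g_0$, and the global relation coming from the equatorial band of squares (the word computed in the proof of Proposition~\ref{p:flagstabprism}) is the natural further ingredient to feed into this computation. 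Once commutativity is established, the remainder is routine.
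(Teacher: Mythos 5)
Your framework is sound and matches the paper's: reduce to showing that the congruence subgroup $\{\,x\in Stab_A/Cl^* : \sum a_i\equiv\sum b_i\equiv 0 \pmod n\,\}$ is trivial, observe that $g_{-1}^n$ and $g_0^n$ are already trivial in $\Gamma$ (your conjugacy argument for $g_0=cbc(ab)^4cbc$ is correct), and conclude that everything follows once $g_{-1}\cdot Cl^*$ and $g_0\cdot Cl^*$ commute. The paper's proof is exactly this reduction, stated in one sentence, followed by the commutator computation.

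The genuine gap is that you never prove the commutativity, and that computation \emph{is} the proof --- everything else you write is either quoted from the paragraphs preceding the proposition or is routine. You correctly identify it as the obstacle, but deferring it with ``it must be a genuine word computation \dots\ Once commutativity is established, the remainder is routine'' leaves the one nontrivial claim unestablished. Your diagnosis of what the computation needs is also slightly off: you suggest importing the global ``equatorial band'' relation from the proof of Proposition~\ref{p:flagstabprism}, but no such extra ingredient is required. The paper derives $g_{-1}^{-1}g_0 = g_0 g_{-1}^{-1}$ modulo $Cl^*$ directly from the Coxeter relations of $[4n,3]$ (chiefly $(bc)^3=id$ and $(ac)^2=id$) together with two rewritings of $w=id$, namely $(ab)^3c(ab)^2 = c(ba)^2c(ba)^3c$ and $(ab)^2c(ab)^2c = bac(ba)^2c(ba)^3$, applied in succession to the word $(ab)^4cbc(ab)^4cbc$. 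Your concern that the relations $g_k\cdot Cl^*=g_{k-2}^{-1}\cdot Cl^*$ only link generators of equal parity is fair, but it does not force a new global relation into the argument; it only means the commutator must be attacked from $w$ directly, which is what the paper does. To complete your proof you would need to carry out (or reproduce) that explicit rewriting.
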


\begin{proof}
Following the paragraph preceding the proposition, it only remains to prove that any element $x = g_{-1}^{a_1} g_0^{b_1} g_{-1}^{a_2} g_0^{b_2} \cdots g_{-1}^{a_m} g_0^{b_m} \cdot Cl^* \in Stab_A / Cl^*$ with $\sum_i a_i \equiv \sum_i b_i \equiv 0$ $(mod \,\, n)$ is trivial. It suffices to prove that $g_{-1} \cdot Cl^*$ and $g_0 \cdot Cl^*$ commute, since clearly $g_{-1}^n$ and $g_0^n$ are trivial in $\Gamma$. We frequently make use of the fact that $(bc)^3 = id$. For convenience we omit ``$\cdot Cl^*$''.
\begin{eqnarray*}
g_{-1}^{-1}g_0 &=& (ab)^4 cbc (ab)^4 cbc = (ab)^3 abbcb (ab)^3 abbcb\\
&=& (ab)^3 acb(ab)^3acb =  (ab)^3 cab(ab)^3 cab = [(ab)^3c(ab)^2](ab)^2cab.
\end{eqnarray*}

We now use the fact that $w = id$ is equivalent to $(ab)^3c(ab)^2 = c(ba)^2c(ba)^3c$.

\begin{eqnarray*}
g_{-1}^{-1}g_0 &=& (c(ba)^2c(ba)^3c)(ab)^2cab = c(ba)^2c(ba)^2bacababcab\\
&=& c(ba)^2c(ba)^2bcbabcab = cbabacbabacbcabcab\\
&=& cbabcababcabacbcab = cbabc[(ab)^2cababc]bab.
\end{eqnarray*}

Since $w = id$ is equivalent to $(ab)^2c(ab)^2c = bac(ba)^2c(ba)^3$,

\begin{eqnarray*}
g_{-1}^{-1}g_0 &=& cbabc(bac(ba)^2c(ba)^3)bab = cbabcbca(ba)^2c(ba)^4b\\
&=&cbacba(ba)^2 c(ba)^4b = cbcaba(ba)^2bbc(ba)^4b\\
&=&cbc(ab)^4 cbc (ab)^4 = g_0g_{-1}^{-1}.
\end{eqnarray*}

\end{proof}

We go now to the remaining cases of $n$-prism, that is, $n \equiv 1,2,3$ $(mod \,\,4)$.

Note that there is a rap-map from the $2n$-prism to the $n$-prism consisting in wrapping twice each $2n$-gon on itself, while identifying the squares by opposite pairs. In other words, the rap-map identifies each flag with the flag obtained from it by the half-turn $R_\pi$ whose axis contains the centers of both $2n$-gons. As a consequence, any string C-group having a flag-action on the $2n$-prism has a flag-action on the $n$-prism. In particular, the minimal regular cover of the $2n$-prism is a regular cover of the $n$-prism. We shall prove that the minimal regular covers of the $2n$-prism and of the $n$-prism coincide whenever $n$ is not a multiple of $4$.

First consider the case when $n \equiv 2$ $(mod\ 4)$, that is, $n = 2k$ for some odd integer $k$. If there is a nontrivial element $\alpha$ in the minimal regular cover of the $4k$-prism acting trivially on the $2k$-prism, then it must act on each flag of the $2k$-prism either like $id$, or like the half-turn $R_\pi$, with not all actions being trivial. We prove next that such an $\alpha$ does not exist.

Note that $\alpha$ always belongs to the stabilizer of flags of type A of the $2k$-prism, but it may or may not belong to the stabilizer $Stab_A$ of flags of type A of the $4k$-prism. We first discard the possibility of $\alpha$ belonging to $Stab_A$. Note that $g_{-1}$ fixes all flags of type B of the $4k$-prism while $g_0$ rotates them 4 steps around the prism. Similarly, $g_0$ fixes flags of type C while $g_{-1}$ rotates them 4 steps around the prism. Let $G_{Pr}$ be the subgroup  of the rotation group of the $2k$-prism generated by a 4-step rotation. Then $Stab_A$ acts on the set consisting of a given flag $\Phi_B$ of type B, a given flag $\Phi_C$ of type C, and the images of $\Phi_B$ and $\Phi_C$ by $G_{Pr}$. Since $2k$ is not a multiple of 4, $R_\pi \notin G$. As a consequence, there is no nontrivial element in $Stab_A / Cl^*$ acting on flags of type B and C either like the identity, or like $R_\pi$, but not like the identity in both kinds of flags.

The set of elements acting like $R_\pi$ on flags of type A is a right coset of $Stab_A$. In fact, we can choose that coset to be $c(ab)^kc \cdot Stab_A$. If $\alpha$ is not in $Stab_A$, then it must belong to $c(ab)^kc \cdot Stab_A$; however, all elements in $Stab_A$ preserve the $n$-gons of the $n$-prism, while $c(ab)^kc$ interchanges them. In doing so, it does not act on flags of type C like $id$ or $R_\pi$. This proves the non-existence of $\alpha$, and hence, the minimal regular cover of the $n$-prism must coincide with that of the $2n$-prism when $n = 2k$ for odd $k$.

Finally consider the case when $n$ is odd. Assume that there is an element $\alpha$ in the minimal regular cover of the $2n$-prism acting on each flag either like $id$ or like $R_\pi$.
We note that $g_{-1}$ and $g_0$ either preserve or rotate 4 steps flags of types B and C. Since $2n \equiv 2$ $(mod \,\, 4)$, then it is possible to map by a word on $g_{-1}$ and $g_0$ any flag type B (or C) into its image by all rotations by an even number of steps, but not by an odd number of steps, around the $2n$-prism. On the other hand, $h_{2n}$ is an involution which maps flags on a $2n$-gon into flags of the other $2n$-gon. These generators also satisfy the property that $h_{2n} g_i h_{2n} = g_i^{-1}$, $i=-1,0$. Since $n$ is odd, there is no element in $Stab_A$ mapping a flag type B or C of the $2n$-prism to its image by $R_\pi$, and $\alpha$ cannot belong to $Stab_A$.

Again we choose the set of elements acting like $R_\pi$ on flags of type A to be $c(ab)^nc \cdot Stab_A$, and we assume $\alpha \in c(ab)^nc \cdot Stab_A$. We note that all elements in $Stab_A$ preserve the flag-orbits of the $2n$-prism, while $c(ab)^nc$ interchanges flag orbits B and C. In doing so, it does not act on flags of type C like $id$ or $R_\pi$. This proves the non-existence of $\alpha$, and hence, the minimal regular cover of the $n$-prism must coincide with that of the $2n$-prism (and to that of the $4n$-prism) when $n$ is odd.

Overall we proved

\begin{theorem}\label{t:mincovprisms}
The automorphism group of the minimal regular cover of the $n$-prism is given by the Coxeter group $[l.c.m(4,n), 3]$ subject to the single extra relation
$(c(ab)^2c(ab)^3)^2 = id$.
\end{theorem}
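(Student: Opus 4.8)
The plan is to establish the theorem by a case analysis on the residue of $n$ modulo $4$, reducing each case to the already-proved description of the minimal regular cover of a prism whose index is divisible by $4$. The arithmetic of $\text{l.c.m.}(4,n)$ organizes the cases: if $n\equiv 0\pmod 4$ then $\text{l.c.m.}(4,n)=n$; if $n\equiv 2\pmod 4$, say $n=2k$ with $k$ odd, then $\text{l.c.m.}(4,n)=4k$; and if $n$ is odd then $\text{l.c.m.}(4,n)=4n$. The case $n\equiv 0\pmod 4$ is immediate: writing $n=4k$, the preceding Proposition identifies the automorphism group of the minimal regular cover of the $4k$-prism as $[4k,3]$ subject to the single relation $(c(ab)^2c(ab)^3)^2=id$, and since $\text{l.c.m.}(4,n)=n=4k$ this is exactly the asserted group.

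For the remaining residues I would invoke the coincidence of minimal covers established in the paragraphs preceding the theorem. When $n\equiv 2\pmod 4$, writing $n=2k$ with $k$ odd, the reduction shows that the minimal regular cover of the $2k$-prism coincides with that of the $4k$-prism; the case just settled identifies this as $[4k,3]=[\text{l.c.m.}(4,n),3]$ with $w=id$. When $n$ is odd, the analogous reduction shows that the minimal regular cover of the $n$-prism coincides with that of the $2n$-prism, and hence with that of the $4n$-prism; since $4n\equiv 0\pmod 4$, this group is $[4n,3]=[\text{l.c.m.}(4,n),3]$ with $w=id$. Assembling the three cases gives the theorem for all $n\ge 3$.

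The real content lies in the coincidence statements rather than in this assembly, so the main obstacle — already surmounted in the preceding discussion — is to show that passing to a prism of larger index does not enlarge the minimal cover. The difficulty is to rule out a nontrivial element $\alpha$ of the cover of the larger prism whose induced action on the smaller prism is trivial, equivalently one that acts on each flag of the larger prism as $id$ or as the half-turn $R_\pi$ but not uniformly as $id$. The elimination of such an $\alpha$ splits according to whether $\alpha$ lies in $Stab_A$ or in the nontrivial coset $c(ab)^kc\cdot Stab_A$ (respectively $c(ab)^nc\cdot Stab_A$ when $n$ is odd): in the first case one uses that the subgroup of the prism's rotation group generated by a $4$-step rotation omits $R_\pi$ whenever the index is not a multiple of $4$, and in the second case one uses that $c(ab)^kc$ interchanges the two base polygons (respectively, for $n$ odd, that $c(ab)^nc$ interchanges the flag-orbits B and C), so that in either case it cannot act on flags of type C as $id$ or $R_\pi$. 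Once this non-existence is established for every relevant index, the minimal covers coincide and the theorem follows by the case analysis above.
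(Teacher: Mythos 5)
Your proposal is correct and follows essentially the same route as the paper: the theorem is assembled exactly as you describe, from the proposition for prisms with index divisible by $4$ together with the two coincidence arguments (for $n\equiv 2 \pmod 4$ and $n$ odd), each of which is proved by ruling out a nontrivial $\alpha$ acting as $id$ or $R_\pi$ on every flag, split according to whether $\alpha$ lies in $Stab_A$ or in the coset $c(ab)^kc\cdot Stab_A$. Your sketch of those elimination arguments matches the paper's in both structure and the specific facts used.
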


At the beginning of the section we discarded the case $n=4$ (the cube) to avoid unnecessary bifurcations of the analysis. Nevertheless, Proposition \ref{p:flagstabprism} and Theorem \ref{t:mincovprisms} hold for $n=4$ as well. The arguments of the proof of Proposition \ref{p:flagstabprism} hold once it is clarified that all faces are equivalent under the automorphism group and a proper definition of the group elements $g_k$ and $h_4$ is provided. The validity of Proposition \ref{p:flagstabprism} can also be derived directly from Theorem \ref{t:treeGenerates} for a suitable spanning tree. Furthermore, Theorem \ref{t:mincovprisms} holds for $n=4$ since relation $(c(ab)^2c(ab)^3)^2 = id$ is trivial in the cube. 

To conclude this section we point out that the arguments here developed also show that the minimal regular cover of the $\infty$-prism, or the map on the plane consisting only of an infinite strip divided into infinitely many squares, is the Coxeter group $[\infty, 3]$ subject to the single relation $(c(ab)^2c(ab)^3)^2$.

\section{Antiprisms}

Let $\Gamma$ be the universal regular cover of the $n$-antiprism ($n \ge 4$), that is, the Coxeter group
\[\langle a, b, c \,|\, a^2 = b^2 = c^2 = (ac)^2 = (bc)^4 = (ab)^{l.c.m(3,n)} = id.\rangle\]
We recall that flags of type A contain a triangle and an edge contained in an $n$-gon. We let $g_{-1} := (ab)^{-3}$, $h_{-1} := bc(ab)^3cb$, and in general $g_k := cb(ab)^{k} c (ab)^3 c (ba)^{k} bc$ 
and $h_k := cb(ab)^{k} cabc (ab)^3 cbac (ba)^{k} bc$ for $k = 0, 1, \dots, n-2$. We further let $h_n := c(ab)^nc$.

\begin{proposition}\label{p:flagstabanti}
Let $g_{i},h_{i},h_{n}$ as defined above, then
\[Stab_A = \langle g_k,h_{i}, h_n \,|\, k = -1, \dots, n-2\rangle.\]\end{proposition}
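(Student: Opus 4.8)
The plan is to adapt the argument of Proposition~\ref{p:flagstabprism} to the face structure of the $n$-antiprism, whose $2$-faces are the $2n$ lateral triangles together with the two $n$-gonal bases, and all of whose vertices have degree $4$. First I would observe that every listed element lies in $Stab_A$ automatically: each is a lollipop word (a stem out from the base flag, a loop, and the same stem back), hence fixes the base flag. The only content is reading off which face each loop encircles. Each $g_k$ and each $h_k$ carries a central factor $(ab)^3$, which is a full rotation about a triangular face (co-degree $3$) conjugated by its stem; the two families $\{g_k\}$ and $\{h_k\}$ account for the $2n$ lateral triangles (say the ``upward'' and ``downward'' ones). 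The word $h_n = c(ab)^n c$ is the lollipop about one of the two $n$-gonal bases.

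Next I would invoke Corollary~\ref{c:FVCorollary}. Fixing an explicit spanning tree $T$ of the flag graph $\mc{GF}$ of the antiprism (the analogue of the tree in Figure~\ref{f:prism tree}), $Stab_A$ is generated by the lollipops about the faces and the vertices of the antiprism with stems in $T$. Because the cover $\Gamma$ has Schl\"afli type $\{\mathrm{l.c.m.}(3,n),4\}$ and every antiprism vertex has degree $4$, each vertex lollipop is trivial in $\Gamma$ by the remark following Corollary~\ref{c:FVCorollary}, and may be discarded. Choosing $T$ suitably, the surviving triangle lollipops are exactly $g_{-1},\dots,g_{n-2}$ and $h_{-1},\dots,h_{n-2}$, and one base lollipop is $h_n$. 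This leaves a single extra generator $\omega$, the lollipop about the second $n$-gonal base, and the whole proposition reduces to showing that $\omega$ is redundant.

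The main obstacle is precisely this reduction: expressing $\omega$ as a product of the $g_k$, $h_k$, $h_n$ and their inverses by an explicit computation in $\Gamma=\langle a,b,c \mid a^2=b^2=c^2=(ac)^2=(bc)^4=(ab)^{\mathrm{l.c.m.}(3,n)}=id\rangle$. Following the template of the prism proof, I would build a telescoping product of the form $g_{-1}^{\pm 1}\, h_n \big(\prod_i g_i^{\pm1}\big)\big(\prod_i h_i^{\pm1}\big)\, g_0^{\pm 1}$, with signs and interleaving chosen to keep the successive loops coherently oriented around the shared stem edges, exactly as in the orientation bookkeeping of Theorem~\ref{l:FVOneEnough}, so that the stems telescope and the interior triangle-rotations cancel in consecutive pairs, leaving a single $c(ab)^n c$-type word encircling the second base. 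The bookkeeping is heavier than in the prism case for two reasons: vertices now have degree $4$, so the cancellations are driven by repeated use of $(bc)^4=id$ (and $(ac)^2=id$) rather than $(bc)^3=id$; and there are two interleaved triangle families $\{g_k\}$ and $\{h_k\}$ to thread around each base vertex in the correct cyclic order. Getting this ordering right, so that every shared edge appearing in one factor is immediately undone by the next, is the crux of the computation.

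Once $\omega$ is written in terms of the listed generators, Corollary~\ref{c:FVCorollary} gives $Stab_A=\langle g_k,h_k,h_n \mid k=-1,\dots,n-2\rangle$, as claimed.
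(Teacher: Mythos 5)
Your setup is exactly the paper's: the listed words are visibly lollipops about the two triangle families and one base, the vertex lollipops die in $[\mathrm{l.c.m.}(3,n),4]$ by the remark after Corollary~\ref{c:FVCorollary}, and the whole proposition reduces to writing the lollipop $\omega = bcbabc(ab)^ncbabcb$ about the second $n$-gon as a product of the $g_k$, $h_k$, $h_n$. Up to that point the proposal is correct and matches the paper.

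The gap is that the reduction is where the paper's proof \emph{begins}, not where it ends: everything after your first two paragraphs is an assertion that a suitable telescoping product exists, and that assertion is the entire content of the proposition once Corollary~\ref{c:FVCorollary} has been invoked. Moreover, the tentative shape you write down, $g_{-1}^{\pm 1}\, h_n \bigl(\prod_i g_i^{\pm1}\bigr)\bigl(\prod_i h_i^{\pm1}\bigr)\, g_0^{\pm 1}$, with the two triangle families in separate blocks, cannot telescope: the lateral triangles of the antiprism alternate between the two families around the band, so a loop from one family shares its cancelling edges only with loops from the \emph{other} family. The paper's computation instead uses the interleaved pairs $\prod_{k=n-2}^{0} h_k^{-1}g_k$, pre- and post-multiplied by $h_n$ and $h_{-1}^{-1}g_{-1}$ (note that $h_{-1}$ must appear; your candidate omits it), and then carries out roughly a page of reductions using $(bc)^4=(ac)^2=id$ to arrive at $bcbabc(ab)^ncbabcb$. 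You correctly identify this bookkeeping as ``the crux,'' but identifying the crux is not the same as resolving it; as written, the proof is incomplete precisely at the one step that is not routine.
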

\begin{proof} While it is immediately clear that the given elements are in the stabilizer of flags of type $A$ since they correspond to walks around either one of the bases ($h_{n}$) or to each of the triangular faces ($g_{k},h_{k}$), and since all of the words corresponding to a walk around a vertex are trivial in $[\text{l.c.m.}(3,n),4]$, by Corollary \ref{c:FVCorollary} and the spanning tree given in Figure \ref{f:antiprism tree} we know that $Stab_{A}$ is generated by $\{g_{k},h_{k},h_{n},bcbabc(ab)^{n}cbabcb\}$ for $-1\le l\le n-2$. It suffices therefore to demonstrate that $bcbabc(ab)^{n}cbabcb$ is equivalent to some product of the elements $g_{k}, h_{i}$ and $h_{n}$.

\begin{figure}[htbp]
\begin{center}
\includegraphics[width=\textwidth]{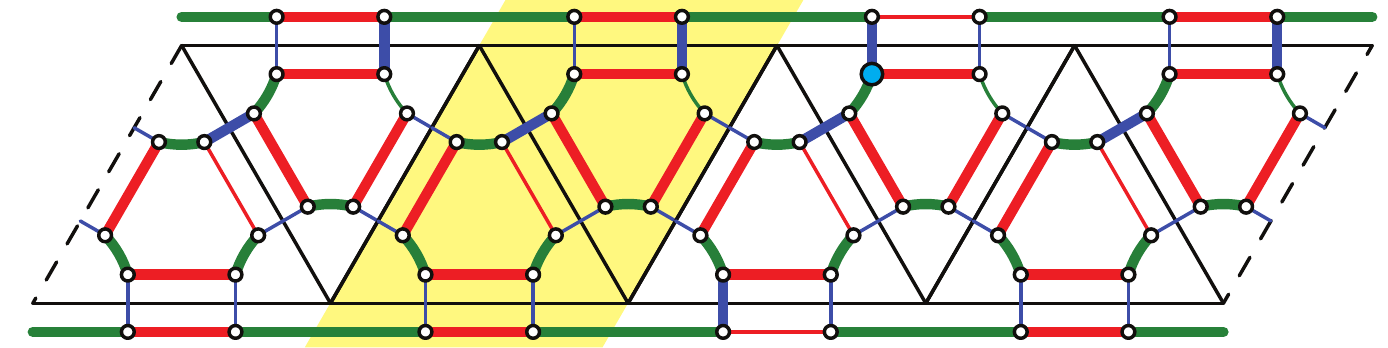}
\caption{The spanning tree of the flag graph of the 4-antiprism (dashed edges are identified to construct the anti prism). Bold edges correspond to edges of the spanning tree $T$, while colored hairline edges correspond to the the edges of $\mc F\mc G(\mc Q)\setminus T$. The tree can be extended to arbitrary $n$-antiprisms by introducing (or removing) additional copies of the yellow highlighted region.}
\label{f:antiprism tree}
\end{center}
\end{figure}
We begin by considering the product
\begin{align*}
\prod_{k=n-2}^{1}h_{k}^{-1}g_{k}&=\prod_{k=n-2}^{1}cb(ab)^{k}cabc(ba)^{3}cbac(ba)^{k}bc\cdot cb(ab)^{k}c(ab)^{3}c(ba)^{k}bc\\
&=\prod_{k=n-2}^{1}cb(ab)^{k}cabc(ba)^{3}c(ab)^{2}c(ba)^{k}bc\\&=cb\left( \prod_{k=n-2}^{1}(ab)^{k}cabc(ba)^{3}c(ab)^{2}c(ba)^{k}\right) bc\\
&=cb(ab)^{n-2}\left(\prod_{k=n-2}^{1}cabc(ba)^{3}c(ab)^{2}cba\right)bc\\
&=cb(ab)^{n-2}\left(cabc(ba)^{2}bcbabcba\right)^{n-2}bc=cb(ab)^{n-2}a\left(cbc(ba)^{2}bcbabcb\right)^{n-2}abc\\
&=c(ba)^{n-1}cbc((ba)^{2}bcbacb)^{n-3}(ba)^{2}bcbabcbabc\\
&=c(ba)^{n-1}cbcb(ababcbac)^{n-3}b(ba)^{2}bcbabcbabc\\
&=c(ba)^{n-1}cbcb(ababcbac)^{n-3}b(ba)^{2}bcbabcbabc\\
&=c(ba)^{n-1}cbcba(babcbc)^{n-3}babcbabcbabc.\\
\end{align*}
We also note that $h_{0}^{-1}g_{0}=cbcabc(ba)^{3}cbacbc\cdot cbc(ab)^{3}cbc=cbcabc(ba)^{2}bcbabcbc$, so
\begin{align*}
\gamma=h_{n}\prod_{k=n-2}^{0}h_{k}^{-1}g_{k}&=c(ab)^{n}c\cdot c(ba)^{n-1}cbcba(bacb)^{n-3}babcbabcbabc\cdot cbcabc(ba)^{2}bcbabcbc\\
&=cabcbcba(babcbc)^{n-3}babcbabcbcbc(ba)^{2}bcbabcbc\\
&=cabcbcba(babcbc)^{n-3}babcbacb(ba)^{2}bcbabcbc\\
&=cabcbcba(babcbc)^{n-3}babcbcbabcbabcbc\\
&=cabcbcba(babcbc)^{n-2}babcbabcbc.
\end{align*}
Multiplying $\gamma$ on the left by $h_{-1}^{-1}g_{-1}$ we obtain
\begin{align*}
h_{-1}^{-1}g_{-1}\gamma&=bc(ba)^{3}cb\cdot(ba)^{3}\cdot cabcbcba(babcbc)^{n-2}babcbabcbc\\
&=bc(ba)^{2}bcbabacabcbcba(babcbc)^{n-2}babcbabcbc\\
&=bc(ba)^{2}bcbaca(babcbc)^{n-2}babcbabcbc\\
&=bc(ba)^{2}bcbc(babcbc)^{n-2}babcbabcbc\\
&=bcba(babcbc)^{n-1}babcbabcbc=bcba(babcbc)^{n-1}bacbcbcabcbc\\
&=bcba(babcbc)^{n-1}bcabcbacbcbc=bcba(babcbc)^{n-1}bcabcbabcb\\
&=bcba(bacbcb)^{n-1}bcabcbabcb=bcbab(acbc)^{n-1}bbcabcbabcb\\
&=bcbab(cabc)^{n-1}cabcbabcb=bcbabc(ab)^{n-1}abcbabcb\\
&=bcbabc(ab)^{n}cbabcb
\end{align*}as desired.
\end{proof}
Then
\[Stab_A = \langle g_k, h_k, h_n \,|\, k = -1, \dots, n-2\rangle.\]

\subsection{Minimal Regular Cover of the $n$-Antiprism}
 We now proceed to prove that $\langle a,b,c|a^{2}=b^{2}=c^{2}=(ab)^{\text{\text{l.c.m.}}(3,n)}=(bc)^{4}=(ac)^{2}=w=id\rangle$, where $w=(c(ab)^2cbc(ab)^2)^2$ is the automorphism group of the minimal regular cover of the $n$-antiprism for arbitrary values of $n\in\NN, n\ge 4$.
Let $\Gamma=\langle a,b,c|a^{2}=b^{2}=c^{2}=(ab)^{\text{\text{l.c.m.}}(3,n)}=(bc)^{4}=(ac)^{2}=id\rangle$.
 Denote the word $(c(ab)^2cbc(ab)^2)^2$ by $w$, and $Cl_{\Gamma}(w)$ by $Cl_*$.

We now show that $g_k \cdot Cl_* = h_{k-2} \cdot Cl_* = g_{k-3} \cdot Cl_*$ for all $k=2, \dots, n-1$. For convenience we omit ``$\cdot Cl_*$'' at the end of each group element. We frequently use the following fact.
\[id = w = caba(bcbc)abab(ca)babcbcabab = caba(cbcb)abab(ac)babcbcabab.\]
Taking the inverse we get
\[id = babacbcbabc(ab)^3cbcabac,\]
and therefore
\begin{equation}\label{eq:antiprism}
(ab)^3 = cbabcbcababcabacbc.\end{equation}

\begin{eqnarray*}
g_k &=& cb(ab)^kc(ab)^3c(ba)^kbc = cb(ab)^kc(cbabcbcababcabacbc)c(ba)^kbc\\
&=& cb(ab)^kbabcbcababcabcab(ba)^kbc = cb(ab)^{k-1}bcbcababacbc(ba)^{k-1}bc\\
&=& cb(ab)^{k-2}acbcabababbcbc(ba)^{k-1}bc = cb(ab)^{k-2}acbc(ab)^3bcbc(ba)^{k-1}bc\\
&=& cb(ab)^{k-2}cabc(ab)^3cbcb(ba)^{k-1}bc = cb(ab)^{k-2}cabc(ab)^3cbca(ba)^{k-2}bc = h_{k-2}\\
&=& cb (ab)^{k-2}cabc(cbabcbcababcabacbc)cbac(ba)^{k-2}bc = cb (ab)^{k-2}(cbcbc)ababcab(ba)^{k-2}bc\\
&=& cb(ab)^{k-2}(bcb)ababc(ba)^{k-3}bc = cb(ab)^{k-3}acbababc(ba)^{k-3}bc\\
&=& cb(ab)^{k-3}c(ab)^3c(ba)^{k-3}bc = g_{k-3}.
\end{eqnarray*}

Hence $Stab_A/Cl_*$ is generated only by the four elements $g_{-1} \cdot Cl_*, g_0 \cdot Cl_*, h_{-1}\cdot Cl_*, h_n\cdot Cl_*$.

We denote by flags of type B those $1$-adjacent to flags of type A, flags of type C those $2$-adjacent to flags of type B, and flags of type D those $2$-adjacent to flags of type A. Then $g_{-1} \cdot Cl_*$ acts like $id$ on flags of type B and C, $g_0 \cdot Cl_*$ acts like $id$ on flags of types B and D, and $h_{-1} \cdot Cl_*$ acts like $id$ on flags of types C and $D$. Each of $g_{-1} \cdot Cl_*$, $g_0 \cdot Cl_*$ and $h_{-1} \cdot Cl_*$ act like a three step rotation around the antiprism on flags of the (unique) type they do not fix. On the other hand, $h_n$ is trivial if $n \equiv 0$ $(mod \,\,3)$; otherwise $h_n \cdot Cl_*$ does not preserve flag orbits B, C and D.

We first analyze the case of the $3n$-prism. According to the action of the three generators of $Stab_A/Cl_*$ on flags of types B, C and D, every element $\alpha$ fixing all flags must be such that the sum of the exponents of all factors $g_{-1} \cdot Cl_*$ (resp. $g_0 \cdot Cl_*$, $h_{-1} \cdot Cl_*$) on any word corresponding to $\alpha$ must be a multiple of $n$. Conversely, any element $\alpha \in Stab_A/Cl_*$ such that all words representing $\alpha$ satisfy the property just described must preserve all flags. We shall prove that relation $w = id$ implies that all such elements $\alpha$ are trivial in $Stab_A/Cl_*$, implying in turn that $Cl_* = Core_\Gamma(Stab_A)$. To do this, it suffices to verify that the elements $g_{-1}$, $g_0$ and $h_{-1}$ commute, or equivalently, that the elements $g_{-1}^{-1} h_{-1} g_{-1} h_{-1}^{-1}\cdot Cl_*$, $g_{-1}^{-1} g_0 g_{-1} g_0^{-1}\cdot Cl_*$ and $h_{-1} g_0 h_{-1}^{-1} g_{0}^{-1} \cdot Cl_*$ are trivial.

Note that $(bc)^4 = id$, and that relation $w = id$ is equivalent to $ababcabab = cbcbabacbabacbc$ and to $cbcababcab = babacbabacbcba$. Assuming this, and omitting ``$\cdot Cl_*$'' for convenience, we have

\begin{eqnarray*}
g_{-1}^{-1} h_{-1} g_{-1} h_{-1}^{-1} &=& (ab)^3bc(ab)^3cb(ba)^3bc(ba)^3cb = ababacabababcabababcbababacb\\
&=& ababcb(ababcabab)abcbababacb \\&=& ababcb(cbcbabacbabacbc)abcbababacb\\
&=& abacabacbabacbcabcbababacb = abcbacbabacbcabcbababacb\\
&=& abcbcababacbacbcbababcab = abcbcababacbab(cbcababcab)\\
&=& abcbcababacbab(babacbabacbcba) = abcbcababacacbabacbcba\\
&=& abcbcababbabacbcba = id.
\end{eqnarray*}

Similarly, since $w \cdot Cl_* = id$ is equivalent to $ababcbcababcab \cdot Cl_*= cbabacbcba \cdot Cl_*$, by omitting ``$\cdot Cl_*$'' we have

\begin{eqnarray*}
g_{-1}^{-1} g_0 g_{-1} g_0^{-1} &=& (ab)^3 cbc (ab)^3 cbc (ba)^3 cbc (ba)^3 cbc \\&=& abababcbcababacbcababacbcbababacbc\\
&=& abababcbcababacbcab(ababcbcababcab)c\\& = &abababcbcababacbacb(cbabacbcba)c\\
&=& abababcbcababacbacbcbabacbcbac \\&=& abababcbcababcababcbcababcbcac\\
&=& ab(ababcbcababcababcbcababc)ba = ab(w)ba = id.
\end{eqnarray*}

Finally, since $w \cdot Cl_* = id$ is equivalent to $ababcababc \cdot Cl_* = cbcbabacbabacb \cdot Cl_*$ and $cababcbcababcababc\cdot Cl_{*}=babacb\cdot Cl_{*}$, and by omitting ``$\cdot Cl_*$'' we obtain

\begin{eqnarray*}
h_{-1} g_0 h_{-1}^{-1} g_{-1}^{-1} &=& bc(ab)^3cbcbc(ab)^3cbcbc(ba)^3cbcbc(ba)^3cbc\\
&=& bcabababbcbabababbcbbabababcbbababacbc\\& =& bcababacbababacabababcababacbc\\
&=& bcababcabababcb(ababcababc)abc\\&=& bcababcabababcb(cbcbabacbabacb)abc\\
&=& bcababcababacabcababacbabc = bcabab(cababcbcababcababc)\\
&=& bcabab(babacb) = id.
\end{eqnarray*}

We have proved

\begin{proposition}
The automorphism group of the minimal regular cover of the $3n$-antiprism is given by the Coxeter group $[3n, 4]$ subject to the single extra relation
$(c(ab)^2cbc(ab)^2)^2 = id$.
\end{proposition}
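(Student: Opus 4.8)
The plan is to realize the automorphism group of the minimal regular cover as a quotient of the universal cover $\Gamma=[3n,4]$, and to show that the single relator $w$ cuts $\Gamma$ down to exactly that group. By Theorem~\ref{th:monIsString} the minimal regular cover of the $3n$-antiprism $\mc Q$ has automorphism group $\mon{\mc Q}$, and by the core identity recalled just after that theorem, $\mon{\mc Q}\cong\Gamma/\mathrm{Core}_\Gamma(Stab_A)$. Since $\Gamma/Cl_*$ is by definition $[3n,4]$ subject to the extra relation $w=id$, it therefore suffices to prove the group equality
\[
\mathrm{Core}_\Gamma(Stab_A)=Cl_*.
\]

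First I would dispatch the inclusion $Cl_*\subseteq\mathrm{Core}_\Gamma(Stab_A)$. The kernel of the flag action $\Gamma\twoheadrightarrow\mon{\mc Q}$ is exactly $\mathrm{Core}_\Gamma(Stab_A)$ and is normal in $\Gamma$, so it is enough to verify that $w$ acts trivially on every flag of $\mc Q$; its normal closure $Cl_*=Cl_\Gamma(w)$ is then contained in the kernel. This is a finite check on representatives of the four flag-orbits, and it is exactly what makes the coset identities derived above (modulo $Cl_*$) compatible with the genuine flag action. In particular $Cl_*\subseteq Stab_A$, so $Stab_A/Cl_*$ is a bona fide quotient.

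The substance is the reverse inclusion $\mathrm{Core}_\Gamma(Stab_A)\subseteq Cl_*$, for which I would assemble the facts already in hand. The reduction $g_k\cdot Cl_*=h_{k-2}\cdot Cl_*=g_{k-3}\cdot Cl_*$, together with the triviality of the top generator $c(ab)^{3n}c$ (which equals $id$ in $\Gamma$, as $(ab)^{3n}=id$), shows that $Stab_A/Cl_*$ is generated by the three cosets $g_{-1}\cdot Cl_*$, $g_0\cdot Cl_*$, $h_{-1}\cdot Cl_*$; the three commutator computations displayed above show these pairwise commute, so $Stab_A/Cl_*$ is abelian. Moreover each of $g_0$ and $h_{-1}$ is conjugate in $\Gamma$ to $(ab)^3$ (by $cbc$ and by $bc$, respectively), while $g_{-1}=(ab)^{-3}$; since $(ab)^{3n}=id$, all three generators therefore have trivial $n$-th power. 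Now take $\alpha\in\mathrm{Core}_\Gamma(Stab_A)\subseteq Stab_A$; using commutativity I may write $\alpha\cdot Cl_*=g_{-1}^{A}g_0^{B}h_{-1}^{C}\cdot Cl_*$ for some integers $A,B,C$. Because $g_{-1}$, $g_0$, $h_{-1}$ act as three-step rotations of order $n$ on, respectively, the single flag-type D, C, B that each does not fix, and $\alpha$ acts trivially on all flags, one reads off $A\equiv B\equiv C\equiv0\pmod n$. The trivial $n$-th powers then force $\alpha\cdot Cl_*=id$, i.e.\ $\alpha\in Cl_*$, which completes the inclusion and the proof.

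I expect the \emph{main obstacle} to be the three commutator identities, namely that $g_{-1}^{-1}h_{-1}g_{-1}h_{-1}^{-1}$, $g_{-1}^{-1}g_0g_{-1}g_0^{-1}$ and $h_{-1}g_0h_{-1}^{-1}g_0^{-1}$ all lie in $Cl_*$: this is the only step where the precise form of $w$ enters, and collapsing the long words there requires the two rewritings of $w=id$ together with $(bc)^4=id$. Once commutativity is secured, the fact that the three generators act on \emph{distinct} flag-orbits decouples the three congruences, so the remaining exponent bookkeeping is routine and the identification $\mon{\mc Q}\cong\Gamma/Cl_*$ is purely formal.
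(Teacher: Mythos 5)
Your proposal is correct and follows essentially the same route as the paper: reduce the generators of $Stab_A/Cl_*$ to $g_{-1}$, $g_0$, $h_{-1}$ via the relation $g_k\cdot Cl_*=h_{k-2}\cdot Cl_*=g_{k-3}\cdot Cl_*$ and the triviality of $c(ab)^{3n}c$, prove pairwise commutativity by the three commutator computations, and then use the action on the flag-types B, C, D together with the trivial $n$-th powers to conclude $\mathrm{Core}_\Gamma(Stab_A)=Cl_*$. Your explicit statement of the easy inclusion $Cl_*\subseteq\mathrm{Core}_\Gamma(Stab_A)$ and of the conjugacy of $g_0$ and $h_{-1}$ to $(ab)^3$ merely makes precise what the paper leaves implicit.
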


Whenever $m \equiv 1,2$ $(mod \,\,3)$ we let $n = 3m$ and note that there is a rap-map from the $n$-prism to the $m$-prism, where the preimage of any flag in the $m$-prism is a set of three flags with the property that they can be obtained from each other by an $m$-step rotation or a $2m$-step rotation around the $3m$-antiprism. Consequently, the minimal regular cover of the $n$-antiprism covers the minimal regular cover of the $m$-prism, and the kernel of this cover is the set of elements in the minimal regular cover of the $n$-antiprism with trivial action on all flags of the $m$-antiprism, that is, all elements in the minimal regular cover of the $n$-antiprism which map each flag of the $n$-antiprism to itself, or to its image under the $m$-steps or $2m$-steps rotations. As we shall see, $id$ is the only element satisfying this property.

Assume that $\alpha$ is an element in automorphism group of the minimal regular cover of the $n$-antiprism mapping each flag of the $n$-antiprism to itself, or to its image under the $m$-steps or $2m$-steps rotations. If $\alpha$ belongs to the stabilizer $Stab_A$ of a flag type $A$ of the $n$-prism 
then, the action of the generators of $Stab_A$ on the flags of the $n$-prism described above implies that $\alpha$ must map each flag of type B, C or D to its image under a $3k$-step rotation for some integer $k$. Since $m$ is not a multiple of $3$, then $\alpha$ must act like $id$. On the other hand, if $\alpha \notin Stab_A$, then $\alpha$ must belong either to $c(ab)^mc \cdot Stab_A$ or to $c(ab)^{2m}c \cdot Stab_A$. However, no element in those two cosets preserves the flag orbits $B, C$ and $D$, contradicting our assumption about the action of $\alpha$ on the flags of the $3n$-prism.

Overall we proved

\begin{theorem}\label{t:mincovanti}
The automorphism group of the minimal regular cover of the $n$-antiprism is given by the Coxeter group $[l.c.m(3,n), 4]$ subject to the single extra relation
$(c(ab)^2cbc(ab)^2)^2 = id$.
\end{theorem}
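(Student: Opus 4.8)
The plan is to deduce the theorem from the two results established immediately above by a case analysis on the residue of $n$ modulo $3$, since $\mathrm{l.c.m.}(3,n) = n$ when $3 \mid n$ and $\mathrm{l.c.m.}(3,n) = 3n$ otherwise. When $3 \mid n$, write $n = 3n'$; then the claim is immediate from the Proposition identifying the minimal regular cover of the $3n'$-antiprism as $[3n', 4]$ subject to $w = id$, because in this case $\mathrm{l.c.m.}(3,n) = n = 3n'$ and so this is exactly the asserted group.

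For $3 \nmid n$ one has $\mathrm{l.c.m.}(3,n) = 3n$, and I would invoke the rap-map from the $3n$-antiprism to the $n$-antiprism described in the paragraph preceding the theorem, under which the preimage of each flag is a triple of flags related by the $n$-step and $2n$-step rotations of the $3n$-antiprism. Since $3 \mid 3n$, the case just settled gives $[3n, 4]$ subject to $w = id$ as the minimal regular cover of the $3n$-antiprism, and through the rap-map this group has a flag action on the $n$-antiprism; hence it is \emph{a} regular cover of the $n$-antiprism. The crux is to upgrade this to the \emph{minimal} cover, which by the correspondence between monodromy groups and minimal covers amounts to showing that the kernel of the induced cover is trivial, i.e.\ that the identity is the only element acting trivially on every flag of the $n$-antiprism.

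I expect this kernel-triviality step to be the main obstacle, and I would treat it by analyzing a hypothetical such element $\alpha$, which must send each flag of the $3n$-antiprism to itself or to its $n$-step or $2n$-step rotational image, splitting into two subcases according to whether $\alpha \in Stab_A$. If $\alpha \in Stab_A$, the descriptions above of how $g_{-1}$, $g_0$ and $h_{-1}$ act on flags of types B, C and D force $\alpha$ to move each such flag by a multiple of three steps; combined with the requirement that $\alpha$ realize an $n$-step or $2n$-step rotation and the hypothesis $3 \nmid n$, the only surviving possibility is the trivial rotation, so $\alpha$ acts as the identity and is trivial modulo $Cl_*$. If instead $\alpha \notin Stab_A$, then $\alpha$ lies in $c(ab)^n c \cdot Stab_A$ or $c(ab)^{2n} c \cdot Stab_A$, whose representatives interchange the two $n$-gonal bases and fail to preserve the flag orbits B, C and D, so $\alpha$ cannot act on those flags as a rotation, a contradiction. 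As no nontrivial $\alpha$ survives, the kernel is trivial, the minimal regular cover of the $n$-antiprism is $[3n, 4] = [\mathrm{l.c.m.}(3,n), 4]$ subject to $w = id$, and together with the case $3 \mid n$ this proves the theorem.
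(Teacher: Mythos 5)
Your proposal is correct and follows essentially the same route as the paper: the case $3\mid n$ is read off from the proposition on $3n'$-antiprisms, and for $3\nmid n$ one uses the rap-map from the $3n$-antiprism onto the $n$-antiprism and shows the kernel is trivial by the same two-case analysis (elements of $Stab_A$ move flags of types B, C, D only by multiples of three steps, which is incompatible with an $n$- or $2n$-step rotation when $3\nmid n$; elements of the cosets $c(ab)^n c\cdot Stab_A$ and $c(ab)^{2n}c\cdot Stab_A$ fail to preserve the flag orbits). The only difference from the paper is notational (the paper writes the target as the $m$-antiprism with $n=3m$, whereas you keep the target as the $n$-antiprism).
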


The 3-antiprism is isomorphic to the octahedron, which is a regular polyhedron. Similarly to the case of the prisms, Proposition \ref{p:flagstabanti} and Theorem \ref{t:mincovanti} also hold in this case.

To conclude we note that the arguments developed above show that the minimal regular cover of the ``$\infty$-antiprism'' (see Figure \ref{fig:antiprism}), is the Coxeter group $[\infty, 4]$ subject to the unique relation $(c(ab)^2cbc(ab)^2)^2$.

\begin{figure}
\begin{center}
\includegraphics[width=14cm, height=1.5cm]{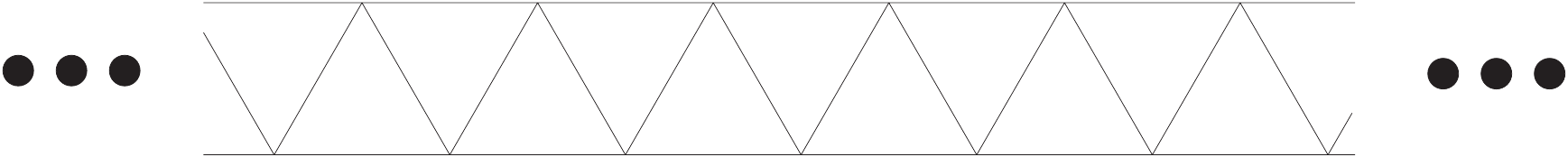}
\caption{Infinite antiprism\label{fig:antiprism}}
\end{center}
\end{figure}
\section{Topology and Algebraic Structure of the Minimal Covers}
We start our discussion by observing that the automorphism group of a minimal regular cover of an $n$-prism is determined by the $\text{l.c.m.}(4,n)$, so for  $m$ odd and $n=m, 2m, 4m$, the corresponding $n$-prisms share the same minimal regular cover. Thus we need only concern ourselves with studying the structure of the covers of $4m$-prisms for arbitrary $m\in\NN$. The following theorem describes the group structure of the monodromy group of the $4m$-prism.
\begin{theorem}\label{t:groupprisms}
Let $\mc P_{4m}$ be the minimal regular cover of the $4m$ prism. Then $\Gamma(\mc P_{4m})$ contains a normal subgroup $H$ isomorphic to $\ZZ_{m}^{3}$, and the quotient $\Gamma({\mc P}_{4m})/H$ is isomorphic to the octahedral group $B_{3}$. In particular, $\Gamma(\mc P_{4m})$ has order $48m^{3}$.
\end{theorem}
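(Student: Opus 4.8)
The plan is to realise $\Gamma(\mc P_{4m})$ as an extension of the octahedral group $B_3$ by an abelian kernel, exploiting the fact that the cube is itself the $4$-prism. By Theorem~\ref{t:mincovprisms},
\[\Gamma(\mc P_{4m}) = \langle a,b,c \mid a^2=b^2=c^2=(ac)^2=(bc)^3=(ab)^{4m}=w=\mathrm{id}\rangle,\]
with $w=(c(ab)^2c(ab)^3)^2$, whereas $B_3=[4,3]=\langle a,b,c\mid a^2=b^2=c^2=(ac)^2=(bc)^3=(ab)^4=\mathrm{id}\rangle$ has order $48$. Since imposing $(ab)^4=\mathrm{id}$ makes both $(ab)^{4m}$ and (as noted following Theorem~\ref{t:mincovprisms}) the relator $w$ redundant, the assignment $a\mapsto a,\ b\mapsto b,\ c\mapsto c$ defines an epimorphism $\phi\colon\Gamma(\mc P_{4m})\to B_3$. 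I set $H:=\ker\phi$, so $\Gamma(\mc P_{4m})/H\cong B_3$ is immediate, and the whole problem reduces to identifying $H$.

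To pin down $|H|$, I would count via the flag action. By Theorem~\ref{th:monIsString}, $\Gamma(\mc P_{4m})\cong\mon{\mc Q}$ for $\mc Q$ the $4m$-prism, and this acts faithfully and transitively on the flags of $\mc Q$. A polyhedron has four times as many flags as edges, and the $4m$-prism has $3\cdot 4m$ edges, so there are $48m$ flags. The stabiliser of the base flag of type $A$ is $Stab_A$, which the analysis preceding Theorem~\ref{t:mincovprisms} shows to be $\langle g_{-1}\rangle\times\langle g_0\rangle\cong\ZZ_m^2$ of order $m^2$. Orbit--stabiliser then yields $|\Gamma(\mc P_{4m})|=48m\cdot m^2=48m^3$, and hence $|H|=48m^3/48=m^3$.

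Next I would exhibit generators of $H$. Put $t_1:=(ab)^4$, and let $t_2:=bc\,(ab)^4\,cb$ and $t_3:=cb\,(ab)^4\,bc$ be the conjugates of $t_1$ by the vertex-rotation $bc$ and its inverse; geometrically these are the three ``rotations'' about the faces meeting at the base vertex. Each $t_i$ is a conjugate of $t_1\in H$, hence lies in $H$, and each satisfies $t_i^m=\mathrm{id}$ (for instance $t_2^m=bc\,(ab)^{4m}\,cb=\mathrm{id}$ after telescoping $cb\cdot bc=\mathrm{id}$). Let $K:=\langle t_1,t_2,t_3\rangle\le H$. The crux is to prove $K=H\cong\ZZ_m^3$, which I would establish using only the single extra relation $w=\mathrm{id}$ in two steps. (i) \emph{Commutativity:} I verify $t_1,t_2,t_3$ commute pairwise; the commuting of $t_1$ with $g_0=cbc\,(ab)^4\,cbc$ is already carried out in the proof of the $4m$-prism proposition, and the remaining commutators reduce to the same kind of $w$-manipulation. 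Then $K$ is abelian, and since $t_i^m=\mathrm{id}$ it is a quotient of $\ZZ_m^3$, so $|K|\le m^3$. (ii) \emph{Normality:} I verify that conjugation by each of $a,b,c$ carries each $t_i$ to some $t_j^{\pm1}$, i.e.\ that $B_3$ acts on the three face-axes by signed permutations. The conjugations by $a$ and $b$ are immediate, since $a(ab)a=b(ab)b=(ab)^{-1}$ gives $a t_1 a=b t_1 b=t_1^{-1}$; conjugation by $c$ is where $w$ is genuinely needed, as it mixes the axes. This makes $K\trianglelefteq\Gamma(\mc P_{4m})$, so $K$ contains the normal closure $\langle\langle t_1\rangle\rangle$; but $\Gamma(\mc P_{4m})/\langle\langle t_1\rangle\rangle\cong B_3$ forces $\langle\langle t_1\rangle\rangle=H$, whence $H\subseteq K\subseteq H$ and $K=H$.

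Combining everything, $H=K$ is a $3$-generated abelian group of exponent dividing $m$ and order $m^3$; a $3$-generated abelian group of order $m^3$ with exponent dividing $m$ must have invariant factors $(m,m,m)$, so $H\cong\ZZ_m^3$. Together with $\Gamma(\mc P_{4m})/H\cong B_3$ and $|\Gamma(\mc P_{4m})|=48m^3$, this is the assertion. The main obstacle is entirely contained in step~(ii)'s $c$-conjugation and the pairwise commutativity of step~(i): the real content is reducing these words using $w=\mathrm{id}$, while everything else is bookkeeping with the orbit--stabiliser count and the presentation of $B_3$.
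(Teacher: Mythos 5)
Your proposal is correct in strategy but reaches the conclusion by a genuinely different route than the paper. The paper works ``from the inside out'': it takes $H=\langle\alpha,\beta,\gamma\rangle$ with $\alpha=(ab)^4$, $\beta=c(ab)^4c$, $\gamma=bc(ab)^4cb$, proves $H\cong\ZZ_m^3$ directly by exploiting the faithful flag action (each generator rotates exactly one of the flag types A, B, C and fixes the other two, so all commutators act trivially on every flag and are therefore the identity, and the three cyclic factors meet trivially), checks normality by conjugation, and only then identifies the quotient as $B_3$ by an elimination argument (a quotient of the cube's symmetry group containing elements of orders $3$ and $4$, with $abc\cdot H$ of order $6$ ruling out the hemicube); the order $48m^3$ falls out at the end. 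You instead work ``from the outside in'': defining $H$ as the kernel of the evident epimorphism onto $B_3$ (legitimate, since the paper's remark after Theorem~\ref{t:mincovprisms} confirms $w$ is trivial in $[4,3]$) makes the quotient statement immediate and avoids the cube-versus-hemicube elimination entirely, and your orbit--stabilizer count $|\Gamma(\mc P_{4m})|=48m\cdot m^2$ pins down $|H|=m^3$ independently, so that once $K=\langle t_1,t_2,t_3\rangle$ is shown normal and abelian of exponent dividing $m$ you are done. That is a clean and in places tidier argument. The one caveat is that you defer exactly the two computations you correctly identify as the crux --- the pairwise commutativity and the $c$-conjugation --- and these are not quite as recyclable from the earlier Proposition as you suggest ($g_0=cbc(ab)^4cbc$ equals $ct_2c$, not one of your $t_i$, so the Proposition's commutator computation yields only one of the three needed commutators after conjugating by $c$). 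You should borrow the paper's device here: since $\Gamma(\mc P_{4m})$ acts faithfully on the $48m$ flags and each $t_i$ acts as a rotation on one flag type and trivially on the other two, every commutator $[t_i,t_j]$ acts trivially on all flags and is therefore the identity, and likewise $ct_2c$ agrees on every flag with an element of $K$ and hence lies in $K$; this disposes of both deferred steps with no word manipulation in $w$ at all.
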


\begin{proof}
We abuse notation and denote by $a, b, c$ the generators of $\Gamma(\mc P_{4m})$, which coincides with the monodromy group of the $4m$-prism.
Let $\alpha := (ab)^4, \beta := c(ab)^4c, \gamma := bc(ab)^4cb \in \Gamma(\mc P_{4m})$. We claim that $H := \langle \alpha, \beta, \gamma \rangle \cong \ZZ_{m}^3$. To see this note first that the order of $\alpha$, $\beta$ and $\gamma$ is $m$. Recall that an element of $\Gamma(\mc P_{4m})$ fixes all flags of the $4m$-prism if and only if it is the identity element, and note that the action of the commutators $\alpha \beta \alpha^{-1} \beta^{-1}$, $\alpha \gamma \alpha^{-1} \gamma^{-1}$ and $\beta \gamma \beta^{-1} \gamma^{-1}$ on all flags of the $4m$-prism is trivial. Moreover, the elements in $\langle \alpha \rangle$ fix flags of types A and B, the elements in $\langle \beta \rangle$ fix flags of types B and C, and the elements in $\langle \gamma \rangle$ fix flags of types A and C, so these three subgroups have trivial intersection. This implies the desired isomorphism.

To see that $H$ is normal in $\Gamma(\mc P_{4m})$ it suffices to see that the conjugates of $\alpha$, $\beta$ and $\gamma$ by $a$, $b$ and $c$ belong to $H$. In all cases the computations are straightforward except, perhaps, for $a\gamma a$. Note that $w=id$ implies that $abc(ab)^3 = bac(ba)^3c(ba)^2c$, and then
\begin{eqnarray*}
a\gamma a &=& abc(ab)^3\cdot abcba = bac(ba)^3c(ba)^2c \cdot abcba = bca(ba)^3cbabacabcba\\ &=& bca(ba)^3cbabcbcba
  = bc(ab)^3cbaca = bc(ab)^3cb = \gamma.
\end{eqnarray*}
Alternatively one can note that the orbit of each flag under the action of $H$ coincides with its orbit under $aHa$, $bHb$ and $cHc$.

Note that all elements in $H$ stabilize all flag orbits. On the other hand, $a$ fixes all flag orbits, $b$ interchanges flags of type A with flags of type B, and $c$ interchanges flags of type A with flags of type C. As a consequence, $a \cdot H$, $b \cdot H$ and $c \cdot H$ are three different elements in $\Gamma(\mc P_{4m})/H$. Furthermore, $\{a \cdot H, b \cdot H, c \cdot H\}$ is a generating set of $\Gamma(\mc P_{4m})/H$ consisting of three involutions, two of which commute. The order of $bc \cdot H$ must divide $3$, and the order of $ab \cdot H$ must divide $4$, since $(ab)^4 \in H$. By observing the action of $(ab)^k \cdot H$ and of $(bc)^k \cdot H$ for $k = 1, 2, 3$ we note that the order  of these elements is $4$ and $3$ respectively. Then $\Gamma(\mc P_{4m})/H$ must be a subgroup of the symmetric group of the cube containing an element of order $3$ and an element of order $4$. It follows that $\Gamma(\mc P_{4m})/H$ is either the symmetry group of the cube, or the symmetry group of the hemicube. However, the order of $abc \cdot H$ is $6$ and not $3$, discarding the latter. This finishes the proof, since the octahedral group is isomorphic to the symmetry group of the cube.
\end{proof}

The symmetry group of the toroidal $4$-polytope $\{4,3,4\}_{(m,0,0)}$ in the notation of \cite[Section 6]{McMSch02} 
is $\ZZ_m^{3} \rtimes B^3$. We note that this group is not ismorphic to $\Gamma(\mc P_{4m})$ since $\Gamma(\{4,3,4\}_{(m,0,0)})$ contains no central element, and $(abc)^{3m} \in Z(\Gamma(\mc P_{4m}))$. In fact, $(abc)^{3m}$ acts on all flags of the $4m$-prism as the half-turn with respect to the axis through the centers of the two $4m$-gons, and hence it commutes with all elements of the monodromy group of the $4m$-prism.

Now that we know the structure of the group, we can discuss the topological structure of $\mc P_{4m}$. We first note that since $w$ (and any other element of $Cl_{\Gamma}(w)$) is a product of an even number of generators, the corresponding quotient is orientation preserving and so $\mc P_{4m}$ lies on an orientable surface. The polyhedron is regular, the number of flags in $\mc P_{4m}$ is $48m^{3}$, so the surface where $\mc P_{4m}$ lies is compact ($\mc P_{4m}$ has a finite number of flags). We also observe that
\bi
\item there are $2\cdot 4m$ flags per face,
\item there are 4 flags per edge,
\item and there are 6 flags per vertex.
\ei Thus the number of faces is $6m^{2}$, the number of edges is $12m^{3}$, and the number of vertices is $8m^{3}$, and so the Euler characteristic of the surface is given by $\chi(\mc P_{4m})=6m^{2}-12m^{3}+8m^{3}=(6-4m)m^{2}$. Thus $\mc P_{4m}$ lies on a compact orientable surface of genus $(2m-3)m^{2}+1$.

We may engage in a similar line of reasoning as regards the $n$-antiprism. Here the minimal regular cover is determined by the $\text{l.c.m.}(3,n)$, so for $n=m$ and $n= 3m$, the corresponding $n$-antiprisms share the same minimal regular cover whenever  $m\not\equiv 0 \ (\bmod 3)$. Thus we need only be concerned with minimal regular covers for the $3m$-antiprims, with $m\in\NN$.
The following theorem describes the group structure of the monodromy group of the $3m$-antiprism.
\begin{theorem}
Let $\mc A_{3m}$ be the minimal regular cover of the $3m$-antiprism. Then $\Gamma(\mc A_{3m})$ contains a normal subgroup $K$ isomorphic to $\ZZ_{m}^{4}$. Furthermore, the quotient $\Gamma(\mc A_{3m})/K$ is isomorphic to the octahedral group $B_3$. In particular, $\Gamma(\mc A_{3m})$ has order $48m^{4}$.
\end{theorem}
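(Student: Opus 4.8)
The plan is to run the argument in exact parallel with the proof of Theorem~\ref{t:groupprisms} for the prism, adjusting for the fact that the antiprism has four flag-orbits (A, B, C, D) and Schl\"afli type $\{3m,4\}$ rather than three orbits and type $\{4m,3\}$. Abusing notation, write $a,b,c$ for the generators of $\Gamma(\mc A_{3m})$, which coincides with the monodromy group of the $3m$-antiprism; thus $(ab)^{3m}=(bc)^4=(ac)^2=id$ and $w=(c(ab)^2cbc(ab)^2)^2=id$. Since $(ab)$ has order $3m$, the element $(ab)^3$ has order $m$ and plays the role that $(ab)^4$ did for the prism. I would set $\alpha:=(ab)^3$ and take $\beta,\gamma,\delta$ to be the conjugates of $\alpha$ by the successive powers of the order-four word $bc$ (the rotation of the square vertex-figure), e.g.\ $\gamma:=bc(ab)^3cb$, and define $K:=\langle\alpha,\beta,\gamma,\delta\rangle$. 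The extra generator, compared with the prism, is exactly the one forced by the degree-$4$ (rather than degree-$3$) vertex figure.

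First I would show $K\cong\ZZ_m^4$. Each generator is a conjugate of $(ab)^3$, hence of order $m$. As in the prism proof I would invoke the key fact that an element of $\Gamma(\mc A_{3m})$ fixes every flag of the $3m$-antiprism if and only if it is the identity. Recording for each of $\alpha,\beta,\gamma,\delta$ which flag-orbits it fixes and which single orbit it rotates by three steps (the data already assembled for $g_{-1}\cdot Cl_*$, $g_0\cdot Cl_*$, $h_{-1}\cdot Cl_*$, together with the outside generator), one checks that every pairwise commutator acts trivially on all flags and is therefore trivial; this gives that the four cyclic groups commute. The same orbit-bookkeeping shows $\langle\alpha\rangle,\langle\beta\rangle,\langle\gamma\rangle,\langle\delta\rangle$ have pairwise trivial intersection, so $K\cong\ZZ_m^4$.

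Next I would prove that $K$ is normal by checking that the conjugates of each generator by $a$, $b$ and $c$ land in $K$. Most of these follow formally from the description of the four generators as the $bc$-orbit of $(ab)^3$, and the orbit argument of Theorem~\ref{t:groupprisms} (the orbit of each flag under $K$ equals its orbit under $aKa$, $bKb$, $cKc$) furnishes a clean alternative. I expect the \emph{main obstacle} to be, exactly as in the prism case, one or two ``awkward'' conjugations --- the analogue of the computation of $a\gamma a$ there --- which are not formal consequences of the generator definitions. These will require substituting $w=id$ in its rewritten form~\eqref{eq:antiprism}, namely $(ab)^3=cbabcbcababcabacbc$, and then reducing the resulting long word using $(bc)^4=id$ and $(ac)^2=id$. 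This bookkeeping is the delicate step.

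Finally I would identify the quotient. Every element of $K$ preserves all four flag-orbits, while $a$ fixes the orbits, $b$ interchanges A with B, and $c$ interchanges A with D (and B with C); hence $aK,bK,cK$ are three distinct involutions generating $\Gamma(\mc A_{3m})/K$. Because $(ab)^3\in K$ the order of $abK$ divides $3$, the relation $(bc)^4=id$ forces the order of $bcK$ to divide $4$, and $(ac)^2=id$ makes $aK$ and $cK$ commute; inspecting the flag action shows these orders are exactly $3$ and $4$. Thus $\Gamma(\mc A_{3m})/K$ is a subgroup of the symmetry group of the cube containing elements of orders $3$ and $4$, so it is either the octahedral group $B_3$ or the hemicube group, and computing that $abcK$ has order $6$ rather than $3$ rules out the hemicube. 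Therefore $\Gamma(\mc A_{3m})/K\cong B_3$, and $|\Gamma(\mc A_{3m})|=|K|\cdot|B_3|=m^4\cdot 48=48m^4$, as claimed.
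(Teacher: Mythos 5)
Your proposal follows the paper's proof essentially verbatim: the paper likewise sets $K := \langle (ab)^3, c(ab)^3c, bc(ab)^3cb, cbc(ab)^3cbc \rangle$ (the $b,c$-conjugates of $(ab)^3$, which are precisely $g_{-1}^{-1}, g_0, h_{-1}$ and one ``outside'' generator), establishes $K\cong\ZZ_m^4$ by the same flag-orbit bookkeeping, verifies normality either by the orbit argument or by the one nontrivial conjugation $a\cdot bc(ab)^3cb\cdot a = (cbc(ab)^3cbc)^{-1}$ computed via relation~\eqref{eq:antiprism}, and identifies the quotient as $B_3$ from the orders of $ab\cdot K$, $bc\cdot K$ and $abc\cdot K$. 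The only cosmetic difference is your description of the generators as conjugates by powers of $bc$ rather than by $id,c,bc,cbc$; the argument is otherwise the same.
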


\begin{proof}
   The proof follows from similar arguments to those of the proof of Theorem \ref{t:groupprisms}.

   Consider now $K := \langle (ab)^3, c(ab)^3c, bc(ab)^3cb, cbc(ab)^3cbc \rangle$ and use similar considerations to those in the proof of Theorem \ref{t:groupprisms} to show that $K \cong \ZZ_{m}^4$.

   To verify that $K$ is normal it can be done by noting that $aKa$, $bKb$ and $cKc$ induce the same orbit as $K$ on any given flag of the $3m$-prism. Alternatively it can be done algebraically, where, using (\ref{eq:antiprism}),
\begin{eqnarray*}
a \cdot bc(ab)^3cb \cdot a &=& abc \cdot (ab)^\cdot cba = abc \cdot cbabcbcababcabacbc \cdot cba\\
 &=& bcbcababcabaca = cbcbababacbc = (cbc (ab)^3 cbc)^{-1},
\end{eqnarray*}
and hence $a \cdot cbc(ab)^3cbc \cdot a = (bc(ab)^3cb)^{-1}$.

   Finally, the quotient $\Gamma(\mc A_{3m})/K$ is isomorphic to $B_3$ since $(ab)^3 \cdot K$ and $(bc)^4 \cdot K$ have order 3 and 4 respectively, whereas $abc \cdot K$ has order 6.
\end{proof}

As in the case of the prisms above, the $w$ in the automorphism group of the minimal regular cover of the $n$-antiprism is a product of an even number of generators and so the corresponding quotient from the covering hyperbolic tiling is orientation preserving, so $\mc A_{3m}$ lies on a compact orientable surface. We also observe that there are $6 m$ flags per face, $4$ per edge, and $8$ per vertex. Thus the number of faces is $8m^{3}$, the number of edges is $12 m^{4}$ and the number of vertices is $6m^{4}$. Thus the Euler characteristic of the surface is given by $\chi(\mc A_{3m})=8 m^{3}-12 m^{4}+6 m^{4}$, and so the genus of $\mc A_{3m}$ is $3m^{4}-4m^{3}+1$. 
\section{Discussion of Results}
In \cite{HarWil10, PelWil10} and ~\cite{PelWil11} generating sets for the stabilizer of a base flag of a polyhedron in the automorphism group of the regular cover were obtained by considering just one generator (at most) per face of the polyhedron. In particular, these generators correspond to lollipop walks. For the finite polyhedra in \cite{HarWil10}, confirmation that this set of generators was adequate to generate the stabilizer had to be confirmed computationally using GAP\nocite{GAP4}, while for the infinite polyhedra in \cite{PelWil10, PelWil11}  we had to rely on a carefully constructed spanning trees and the application of Theorem \ref{t:treeGenerates} to demonstrate sufficiency. It is not, however, reasonable to suppose that such a set of generators would suffice in general, even if one also includes all of the generators corresponding to lollipop walks for the vertices. In particular, counterexamples may easily be obtained via consideration of polyhedral maps on the projective plane. Thus Theorem \ref{l:FVOneEnough} provides a sufficiency condition for generating sets for polyhedra with planar flag graphs, in particular the lemma shows the sufficiency in general of a much smaller set of generators than those suggested by Theorem \ref{t:treeGenerates} for spherical and planar polyhedra, but some additional questions remain in this area requiring further investigation. For the polyhedra with planar flag graphs will any set of generators corresponding to one lollipop walk per face and vertex of the polyhedron work, or must the stems of the lollipops all belong to a tree (as required by the proof of the theorem and noted in Corollary \ref{c:FVCorollary})? For polyhedra of other topological types, what conditions are necessary for a collection of generators to guarantee that they suffice to generate the stabilizer of a base flag, in particular, are there correspondingly small sets of generators (e.g., one corresponding to each lollipop walk around a face or vertex of the polyhedron, plus some small number depending on the genus)? Likewise, little, if anything, seems to be known about sufficiency theorems for generating sets for the stabilizer of a base flag of abstract polytopes of higher rank, where upper bounds are given by the generating sets given by Theorem \ref{t:treeGenerates}. Finding such small generating sets can be instrumental in characterizing the structure of minimal regular covers because they significantly reduce the complexity of the associated computations. Thus it is an open question  whether one may  determine, based on  geometric features of a polytope (e.g., rank, number of facets and/or vertices, etc.), a small upper bound on the number of generators for the stabilizer of a base flag of a finite polytope. 

\section{Acknowledgements}
The authors would like to thank Barry Monson for his many kindnesses, and his helpful suggestions for this project. The authors would also like to thank the Fields Institute for providing valuable opportunities to discuss and expand this research during the Workshop on Symmetry in Graphs, Maps and Polytopes at the Institute.

The work of Daniel Pellicer  was supported by IACOD-M\'exico under project grant IA101311.

\bibliographystyle{ieeetr}

 \end{document}